\theoremstyle{plain} 
\newtheorem{thm}{Theorem}[section]
\newtheorem{lemma}[thm]{Lemma}
\newtheorem{lem}[thm]{Lemma}
\newtheorem{proposition}[thm]{Proposition}
\newtheorem{prop}[thm]{Proposition}
\newtheorem{corollary}[thm]{Corollary}
\theoremstyle{definition}
\newtheorem{definition}[thm]{Definition}
\newtheorem{remark}[thm]{Remark}
\numberwithin{equation}{section}
\newcommand{\C}{\mathbb{C}}
\newcommand{\Z}{\mathbb{Z}}
\newcommand{\Q}{\mathbb{Q}}
\newcommand{\N}{\mathbb{N}}
\newcommand{\cH}{\mathcal{H}}
\newcommand{\cI}{\mathcal{I}}
\newcommand{\cJ}{\mathcal{J}}
\newcommand{\cL}{\mathcal{L}}
\newcommand{\cM}{\mathcal{M}}
\newcommand{\cN}{\mathcal{N}}
\newcommand{\cO}{\mathcal{O}}
\newcommand{\sD}{\mathscr{D}}
\newcommand{\mc}[1]{{\mathcal{#1}}}
\newcommand{\mb}[1]{{\mathbb{#1}}}
\newcommand{\ms}[1]{{\mathscr{#1}}}
\newcommand{\mrm}[1]{{\mathrm{#1}}}
\newcommand{\mbf}[1]{{\mathbf{#1}}}
\renewcommand{\d}{\partial}
\newcommand{\gr}{\mathrm{gr}}
\newcommand{\DR}{\mathrm{DR}}
\newcommand{\MHM}{\mathrm{MHM}}
\let\mhm\MHM
\newcommand{\qcoh}{\mathrm{QCoh}}
\newcommand{\Lotimes}{\mathbin{\overset{\mathbf{L}}\otimes}}
\DeclareMathOperator{\spec}{Spec}
\DeclareMathOperator{\coker}{coker}
\begin{document}

\title{On the Hodge and $V$-filtrations of mixed Hodge modules}

\author{Dougal Davis and Ruijie Yang}
\thanks{DD was supported by the ARC grant FL200100141.}
\begin{abstract}
In this paper, we prove a Beilinson-type formula for the $V$-filtration of Kashiwara and Malgrange on a complex mixed Hodge module, using Hodge filtrations on the localization. Our formula expresses the $V$-filtration as the filtered $\sD$-module underlying a pro-mixed Hodge module.

We apply this to the theory of higher multiplier and Hodge ideals. Our first result shows that higher multiplier ideals can be obtained directly from Hodge ideals by taking a suitable limit. As a corollary, we deduce that Hodge ideals are left semi-continuous if and only if they coincide with higher multiplier ideals, thereby improving results of Saito and Musta\c{t}\u{a}--Popa and resolving a folklore question. We further prove a birational transformation formula for higher multiplier ideals, generalising the classical formula for multiplier ideals and answering a question of Schnell and the second author. Finally, we provide very quick proofs of the main vanishing theorems for Hodge ideals, and strengthen a result of B. Chen.
\end{abstract}
\subjclass[2020]{Primary: 14J17, 14D07, 14F10, Secondary: 14F18}

\keywords{Kashiwara-Malgrange $V$-filtrations, Beilinson's nearby cycle formula, complex Hodge modules, higher multiplier ideals, Hodge ideals}
\maketitle

\section{Introduction}

In this note, we give a general expression for the $V$-filtration of Kashiwara and Malgrange in terms of the Hodge filtration in Saito's theory of mixed Hodge modules. We give illustrative applications to the theory of higher multiplier ideals and Hodge ideals in birational geometry, including a precise comparison between the two. We also provide a birational transformation formula for higher multiplier ideals and give quick proofs of the main vanishing theorems of Hodge ideals.

Saito's theory of mixed Hodge modules \cite{Saito88, Saito90} extends the theory of variations of Hodge structure to a six functor formalism in the sense of Grothendieck. A mixed Hodge module in Saito's sense consists of a regular holonomic $\ms{D}$-module $\mc{M}$ equipped with a Hodge filtration, a weight filtration and a $\mb{Q}$-structure, satisfying some rather stringent conditions. A central point in the theory is to specify how these data should behave under direct image functors, the most subtle case being the extension of the Hodge filtration along an open immersion. This is controlled in Saito's theory using an auxiliary ingredient: the \emph{Kashiwara-Malgrange $V$-filtration}, an important object in $\ms{D}$-module theory in its own right.

Conversely, we show that $V$-filtrations on mixed Hodge modules can be recovered from the six functor formalism, or more precisely, from the behaviour of Hodge filtrations under open immersions. This perspective is powerful, as it allows one to treat $V$-filtrations as filtered $\sD$-modules underlying pro-mixed Hodge modules (in the sense of \cite[\S 2.2]{DMB}, for example), which leads to the applications discussed above.  While these applications can also be proved by other means, we show in a forthcoming paper \cite{DY} that the methods introduced here can be pushed much further and use them to prove the Strong Monodromy Conjecture for hyperplane arrangements.

Our result is most conveniently stated in the language of \emph{complex} mixed Hodge modules of Sabbah and Schnell \cite{MHMproject}, a generalisation of Saito's theory without $\mb{Q}$-structures. From now on, by ``mixed Hodge modules'' we will always mean complex mixed Hodge modules. We refer the reader who would prefer to work directly in Saito's theory to \S\ref{subsec:foundations} below. Our main theorem is as follows. Let $X$ be a complex algebraic variety\footnote{All of our results can, with appropriate care, be extended to the setting of complex manifolds and holomorphic functions: we leave the details to the interested reader (see also \cite{DY}).} and $f : X \to \mb{C}$ a regular function. Denote by $j:U=f^{-1}(\C^\ast)\to X$ the open embedding. For a holonomic (left) $\ms{D}_X$-module $\mc{M}$ on which $f$ acts bijectively, we may form the $\ms{D}_X[s]$-module $\mc{M}[s]f^s$ on the one hand and the pushforward $\iota_+\mc{M}$ under the graph embedding $\iota : X \to X \times \mb{C}$ on the other hand. It was shown by Malgrange \cite{Malgrange} (see also \cite{MPVfiltration}) that there is an isomorphism of $\ms{D}_X\langle s, t\rangle$-modules
\begin{equation}\label{eqn: Malgrange iso introduction}
    \rho:\cM[s]f^s \xrightarrow{\sim} \iota_{+}\cM,
\end{equation} 
where the coordinate $t$ on $\mb{C}$ acts on the left by $s \mapsto s + 1$ and the formal variable $s$ acts on the right by $-\partial_t t$. Our main result refines this when $\mc{M}$ is a mixed Hodge module.

\begin{thm}\label{thm: V filtration via power of fs}
    Assume that $\cM$ underlies a mixed Hodge module. Then for $\alpha \in \mb{R}$ and each $p\in \Z$, the inverse systems
    \[ \left\{F_p j_{\ast}\left(\frac{j^{\ast}\cM[s]f^s}{(s+\alpha)^n} \right)\right\}_{n\geq 0} \quad \text{and} \quad \left\{F_p j_!\left(\frac{j^{\ast}\cM[s]f^s}{(s+\alpha)^n} \right)\right\}_{n\geq 0}\]
    are constant for $n\gg 0$, where $F_\bullet$ denotes the Hodge filtration on the $!$ and $*$ extensions of the mixed Hodge modules $j^*\mc{M}[s]f^s/(s + \alpha)^n$. Moreover, \eqref{eqn: Malgrange iso introduction} induces isomorphisms of $\sD_X[s]$-modules
    \begin{gather} \label{eq:magic formulas}
    \begin{aligned}
    \bigcup_p \varprojlim_{n} F_p j_{\ast}\left(\frac{j^{\ast}\cM[s]f^s}{(s+\alpha)^n} \right)&\xrightarrow{\sim }V^{\alpha}\iota_{+}\cM,\\
    \bigcup_p \varprojlim_{n} F_p j_{!}\left(\frac{j^{\ast}\cM[s]f^s}{(s+\alpha)^n} \right)&\xrightarrow{\sim }V^{>\alpha}\iota_{+}\cM.
    \end{aligned}
    \end{gather}
    If $\alpha \geq 0$ then these isomorphisms respect the Hodge filtrations up to an index shift by $1$: 
    \[  \varprojlim_{n} F_p j_{\ast}\left(\frac{j^{\ast}\cM[s]f^s}{(s+\alpha)^n} \right)\xrightarrow{\sim} F_{p+1}V^{\alpha}\iota_{+}\cM,\]
    and similarly for $j_!$ and $V^{>\alpha}$.
\end{thm}

Here we note that $j^{\ast}\cM[s]f^s/(s+\alpha)^n$ is the tensor product of $j^*\mc{M}$ with an admissible variation of mixed Hodge structure on $U$, so it naturally inherits the structure of a mixed Hodge module. 

For an arbitrary holonomic $\sD$-module $\cM$ on which $f$ acts bijectively, there is a well-known isomorphism 
    \begin{equation}\label{eqn: Beilinson}  \gr^{\alpha}_V\iota_{+}\cM\xrightarrow{\sim} \textrm{coker}\left(\varprojlim_n j_{!}\left(\frac{j^{\ast}\cM[s]f^s}{(s+\alpha)^n}\right) \to \varprojlim_n j_{\ast}\left(\frac{j^{\ast}\cM[s]f^s}{(s+\alpha)^n}\right)\right). \end{equation}
The right hand side is Beilinson's expression for the nearby cycles functor \cite{Beilinson}, while the isomorphism is essentially due to Kashiwara \cite{Kas83} (see also \cite{MM} and \cite[Lemma 6.2]{DV22}). Since $\bigcup_p \varprojlim_n F_p$ is nothing but the inverse limit in the category of exhaustively filtered $\ms{D}$-modules, one can think of Theorem \ref{thm: V filtration via power of fs} as a lift of \eqref{eqn: Beilinson} to $V^{\alpha}\iota_{+}\cM$ and $V^{>\alpha}\iota_+\mc{M}$ when $\cM$ underlies a mixed Hodge module. 

We remark that the $\ms{D}$-modules on the left hand side of \eqref{eq:magic formulas} were used to establish deformation and wall-crossing properties of Hodge filtrations in \cite[\S 3.3]{DV23} (see also \cite[\S 2]{DMB}). This note arose from an attempt to relate these to more classical constructions in $\ms{D}$-module theory.

Consider the following map induced by the evaluation map and \eqref{eqn: Malgrange iso introduction}:
\begin{equation}\label{eqn: evaulation map for any M}ev_{s=-\alpha}:\iota_{+}\cM\xrightarrow{\rho^{-1}} \cM[s]f^s\xrightarrow{s=-\alpha} \cM\cdot f^{-\alpha}.\end{equation}
By construction, we have short exact sequences of mixed Hodge modules
\[ 0 \to j_*\left(\frac{j^*\mc{M}[s]}{(s + \alpha)^{n - 1}}\right)(1) \xrightarrow{s + \alpha} j_*\left(\frac{j^*\mc{M}[s]}{(s + \alpha)^n}\right) \to j_*(j^{\ast}\mc{M} \cdot f^{-\alpha}) \to 0 \]
for all $n$, and similarly for $j_!$. Hence, Theorem \ref{thm: V filtration via power of fs} implies the following corollary.

\begin{corollary}\label{corollary: ses of Hodge on V}
 For any $\cM$ as above and any $\alpha \geq 0$, the morphism
 \[ s + \alpha : V^\alpha \iota_+\mc{M} \to V^\alpha \iota_+\mc{M} \{-1\} \]
 is strict with respect to the Hodge filtrations, where $F_k\cN\{-1\}=F_{k+1}\cN$.
 Moreover, for $k \in \mb{Z}$, we have short exact sequences
    \begin{align*}
        0 \to F_{k}V^\alpha \iota_+ \cM \xrightarrow{s+\alpha} &F_{k+1} V^\alpha \iota_+ \cM \xrightarrow{ev_{s=-\alpha}} F_k(\cM \cdot f^{-\alpha}) \to 0, \\
    0 \to F_{k}V^{>\alpha} \iota_+ \cM \xrightarrow{s+\alpha} &F_{k+1} V^{>\alpha} \iota_+ \cM \xrightarrow{ev_{s=-\alpha}} F_k(j_{!}(j^{\ast}\cM \cdot f^{-\alpha})) \to 0
    \end{align*}
    and hence a strict filtered surjection
    \begin{equation}\label{eqn: surjection from V to j!ast}
    V^{>\alpha}\iota_{+}\cM\{-1\} \twoheadrightarrow j_{!\ast}(j^{\ast}\cM\cdot f^{-\alpha}).
    \end{equation}
\end{corollary}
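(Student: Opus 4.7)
The plan is to derive the corollary from the short exact sequences of mixed Hodge modules already displayed just before the statement, by applying the filtration functor $F_\bullet$ and passing to the inverse limit over $n$ using Theorem \ref{thm: V filtration via power of fs}. As a first step, I would apply $F_{k+1}$ to the short exact sequence of MHMs
\[ 0 \to j_*\!\left(\frac{j^*\cM[s]f^s}{(s+\alpha)^{n-1}}\right)\!(1) \xrightarrow{s+\alpha} j_*\!\left(\frac{j^*\cM[s]f^s}{(s+\alpha)^n}\right) \xrightarrow{ev_{s=-\alpha}} j_*(\cM\cdot f^{-\alpha}) \to 0. \]
Since morphisms in $\MHM$ are strict with respect to the Hodge filtration, this produces a short exact sequence of filtered $\sD$-modules; Saito's Tate twist convention $F_{p+1}[\cN(1)] = F_{p}\cN$ converts the leftmost term into $F_{k} j_*(j^*\cM[s]f^s/(s+\alpha)^{n-1})$. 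The analogous sequence with $j_!$ in place of $j_*$ holds by the same argument.

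Next I would pass to $\varprojlim_n$. Theorem \ref{thm: V filtration via power of fs} guarantees that each inverse system is eventually constant, so $\varprojlim_n$ is exact and is computed by any sufficiently large $n$. Its index-shifted comparison (valid because $\alpha \geq 0$) identifies $\varprojlim_n F_{p} j_*(\cdots) \xrightarrow{\sim} F_{p+1}V^\alpha\iota_+\cM$ and $\varprojlim_n F_p j_!(\cdots) \xrightarrow{\sim} F_{p+1}V^{>\alpha}\iota_+\cM$. Plugging these identifications into the limit of the filtered short exact sequences above and reindexing produces the two short exact sequences in the statement of the corollary. The strictness of $s+\alpha : V^\alpha\iota_+\cM \to V^\alpha\iota_+\cM\{-1\}$ (and likewise for $V^{>\alpha}$) is then immediate, since at each filtration level the image of $s+\alpha$ is exactly the kernel of the evaluation map.

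Finally, for the strict filtered surjection \eqref{eqn: surjection from V to j!ast}, I would use that $j_{!*}(j^*\cM\cdot f^{-\alpha})$ is, by definition, the image in $\MHM$ of the canonical morphism $j_!(j^*\cM\cdot f^{-\alpha}) \to j_*(j^*\cM\cdot f^{-\alpha})$; this gives a strict surjection $j_!(j^*\cM\cdot f^{-\alpha}) \twoheadrightarrow j_{!*}(j^*\cM\cdot f^{-\alpha})$ (strict, once again, because morphisms of MHMs are strict). Composing with the strict surjection $V^{>\alpha}\iota_+\cM\{-1\} \twoheadrightarrow j_!(j^*\cM\cdot f^{-\alpha})$ extracted from the second short exact sequence yields \eqref{eqn: surjection from V to j!ast}. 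The main obstacle I anticipate is the filtration bookkeeping: keeping careful track of the $-1$ shift from the Tate twist, the $+1$ shift in Theorem \ref{thm: V filtration via power of fs}, and the convention $F_k \cN\{-1\} = F_{k+1}\cN$. Once this accounting is done, the argument is essentially formal.
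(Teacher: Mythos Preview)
Your proposal is correct and follows exactly the approach the paper takes: the paper's ``proof'' consists solely of the displayed short exact sequence of mixed Hodge modules together with the sentence ``Hence, Theorem~\ref{thm: V filtration via power of fs} implies the following corollary,'' and your proposal simply spells out the details the paper leaves to the reader (strictness of morphisms of MHMs, eventual constancy of the inverse systems, and the index shifts). Your anticipated bookkeeping is the only content, and you have it right.
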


Having formulated Corollary \ref{corollary: ses of Hodge on V}, one can also prove it directly without appealing to Theorem \ref{thm: V filtration via power of fs}. If $\cM$ is a simple regular holonomic $\sD$-module, \eqref{eqn: surjection from V to j!ast} is proved (without Hodge filtrations) using a $b$-function argument in \cite[Proposition 5.10]{DLY}. In general, the injectivity of $s + \alpha$ is elementary and the surjectivity of the evaluation maps $ev_{s=-\alpha}$ in Corollary \ref{corollary: ses of Hodge on V} can also be proved using Saito's formula for the Hodge filtrations on $\ast$- and $!$-extensions (see \S \ref{sec: extension of Hodge filtration}). For the surjectivity of $ev_{s=-\alpha}:F_{k+1}V^{\alpha}\iota_{+}\cM\to F_k(\cM\cdot f^{-\alpha})$, see \cite[Theorem 1.6]{dakin} or \cite[Proposition 3.1]{MingyiZhang}. 

\subsection{Higher multiplier ideals as limits of Hodge ideals}
Now let $D$ be an effective divisor on a smooth variety $X$ and $\alpha>0$. The \emph{Hodge ideals} $I_k(\alpha D)$ \cite{MPbirational} and \emph{higher multiplier ideals} $\cI_{k,-\alpha}(D)$ \cite{SY23} (see also \cite{Saito16}) are two families of ideals introduced recently, which both recover the classical multiplier ideal $\cJ((\alpha-\epsilon)D)$ if $k=0$. They are known to encode similar information about the singularities of $D$: for example, both can be used to determine Saito's minimal exponent in the same way that the multiplier ideals determine the log canonical threshold \cite{MPVfiltration,SY23}.

Suppose locally that $D=\mathrm{div}(f)$ and write $\tilde{I}_k(\alpha D)\colonequals \cI_{k,-\alpha}(D)$. By definition,
\[ \tilde{I}_k(\alpha D)\otimes \d_t^k \cong \gr^F_{k+1}V^{\alpha}\iota_{+}\cO_X.\]
On the other hand, in Lemma \ref{lemma: two definitions of Hodge ideals agree}, we prove that the Hodge ideals can be defined by
\begin{equation}\label{eqn: new point of view of Hodge ideals}
I_k(\alpha D)\otimes \cO_X(kD)\cdot f^{-\alpha}=F_kj_{\ast}(\cO_U\cdot f^{-\alpha}).
\end{equation}
Then Corollary \ref{corollary: ses of Hodge on V} gives
    \begin{equation}\label{eqn: Hodge ideal via Vfiltration}I_k(\alpha D)\otimes \cO_X(kD)\cdot f^{-\alpha}=ev_{s=-\alpha}(F_{k+1}V^{\alpha}\iota_{+}\cO_X),\end{equation}
which recovers the main result of Musta\c{t}\u{a} and Popa \cite[Theorem A]{MPVfiltration} and a result of Mingyi Zhang \cite[Proposition 3.1]{MingyiZhang}. Corollary \ref{corollary: ses of Hodge on V} also implies a weighted version of \eqref{eqn: Hodge ideal via Vfiltration}, which is obtained independently by Dakin \cite[Theorem 1.6]{dakin}. Note that the equation \eqref{eqn: Hodge ideal via Vfiltration} implies that \begin{equation*}\label{eqn: comparison modulo I} I_k(\alpha D)=\tilde{I}_k(\alpha D) \quad \textrm{mod $\cI_D$},\end{equation*}
which was proved by Saito \cite{Saito16} ($\alpha=1$) and  \cite[Theorem A$'$]{MPVfiltration} ($\alpha\in \Q$).

However, these two ideals are not equal in general. For example, let $D=(x^2+y^3=0)$ and  $5/6\leq \alpha<1$, then \cite[Example 10.5]{MPbirational} and \cite[Proposition 4.3]{MingyiZhang} (see also Example 6.20 in the first \texttt{arXiv} version of \cite{SY23} via the Thom-Sebastiani-type formula for higher multiplier ideals) give
\begin{equation}\label{eqn: Hodge and higher are not equal for cusp}I_{2}(\alpha D)=(x^3,x^2y^2,xy^3,y^4-(2\alpha+1)x^2y)\neq (x^3,x^2y^2,xy^3,y^5,x^2y)=\tilde{I}_2(\alpha D).\end{equation}
Finding the precise relation between the two ideals remained a folklore question. We solve this mystery by showing that one can recover the higher multiplier ideal from the set of all Hodge ideals
as follows. We say $\cI\subseteq\cO_{X\times S}$ is \emph{a family of ideals over a scheme $S$} if the quotient $\cO_{X\times S}/\cI$ is flat over $S$. 
\begin{thm}\label{thm:higher vs hodge}
For each $\alpha > 0$ and all $k\in \N$, there exists a unique family of ideals $\mathfrak{I}_{k,\alpha} \subseteq \mc{O}_{X\times \mb{P}^1}$ over $\mb{P}^1$ such that
\[ I_k(\beta D) = \mathfrak{I}_{k,\alpha}|_{X \times \{\beta\}} \quad \text{if $\beta=\alpha-\epsilon, \quad0 \leq \epsilon \ll 1$}.\]
The higher multiplier ideal $\tilde{I}_k(\alpha D)$ is given by evaluating this family at infinity:
\[ \tilde{I}_k(\alpha D) = \mathfrak{I}_{k,\alpha}|_{X \times \{\infty\}}.\] 
\end{thm}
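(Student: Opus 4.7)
The plan is to construct $\mc{I}$ explicitly from the $\mc{O}_X[s]$-module $M := F_{k+1}V^\alpha \iota_+ \mc{O}_X$, then verify its fibers and flatness. By Theorem~\ref{thm: V filtration via power of fs}, $M$ is a finitely generated $\mc{O}_X[s]/((s+\alpha)^N)$-module for $N \gg 0$, and via the Malgrange isomorphism~\eqref{eqn: Malgrange iso introduction} each $m \in M$ is represented by $g(s)f^s$ with $g(s) = \sum_{j=0}^{k} g_j (s+\alpha)^j$ a polynomial of degree at most $k$ (the bound $k$ coming from the Hodge level $k+1$ and the identification of $F_{k+1}\iota_+\mc{O}_X$ with polynomials of degree $\leq k$ in $s$ under Malgrange). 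Introducing projective coordinates $[u:v]$ on $\mb{P}^1$ with affine coordinate $\beta = u/v$, I define the homogenization
\[ \tilde g(u,v) := v^k g(-u/v) = \sum_{j=0}^{k} g_j (\alpha v - u)^j v^{k-j}, \]
a homogeneous polynomial of degree $k$ in $(u,v)$ with coefficients in $\mc{O}_X(kD)$, and let $\mc{I} \subseteq \mc{O}_{X \times \mb{P}^1}$ be the ideal whose sections on each standard affine chart of $\mb{P}^1$ are locally generated by $f^k \tilde g(u,v)$ as $m$ varies over $M$.

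To verify the fibers at finite $\beta$, I choose $\epsilon_0 > 0$ small enough that no $V$-filtration jumping number of $\iota_+\mc{O}_X$ lies in $(\alpha-\epsilon_0, \alpha)$; then $V^\beta \iota_+ \mc{O}_X = V^\alpha \iota_+ \mc{O}_X$ for $\beta \in (\alpha-\epsilon_0, \alpha]$, so Corollary~\ref{corollary: ses of Hodge on V} applied at $\beta$ gives $f^k \cdot \{g(-\beta) : m \in M\} = f^k \cdot ev_{s=-\beta}(F_{k+1}V^\beta) = I_k(\beta D)$. For the fiber at infinity, I work in the chart $u=1, v=0$: the map $m \mapsto \tilde g(1,0) = (-1)^k g_k$ sends each $m$ to the leading coefficient of $g$ in $(s+\alpha)$, and its kernel is precisely $F_k V^\alpha$ (elements of strictly smaller polynomial degree, corresponding to strictly lower Hodge filtration). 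Hence the image identifies with $\gr^F_{k+1} V^\alpha \cong \tilde I_k(\alpha D) \otimes \partial_t^k$, and tracking the Malgrange identification $\partial_t^k \leftrightarrow (-s)^k f^{s-k}$ yields $\mc{I}|_\infty = \tilde I_k(\alpha D)$.

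Flatness of $\mc{O}_{X \times \mb{P}^1}/\mc{I}$ over $\mb{P}^1$ follows from constancy of the Hilbert polynomial of the fibers; this can be extracted from the two short exact sequences of Corollary~\ref{corollary: ses of Hodge on V} together with a length comparison between the quotients $M/F_k V^\alpha$ and $M/(s+\alpha)F_k V^\alpha$ of $M$. Uniqueness is then automatic, as any two flat families agreeing on the Zariski-dense subset $(\alpha-\epsilon_0, \alpha] \subset \mb{P}^1$ must coincide. The principal difficulty is the identification of the $\infty$-fiber with $\tilde I_k(\alpha D)$: this requires careful bookkeeping of degrees, pole orders, and normalizations under the Malgrange isomorphism so that the leading-coefficient map on $M$ is correctly matched with the definition of higher multiplier ideals via $\gr^F_{k+1} V^\alpha$.
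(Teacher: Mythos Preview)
Your construction is essentially the same as the paper's: both build the family from $M=F_{k+1}V^\alpha\iota_+\mc{O}_X$, viewed via the Malgrange isomorphism as polynomials of degree $\leq k$ in $s$ with coefficients in $\mc{O}_X(kD)$, and then projectivize. Your choice to expand in powers of $(s+\alpha)$ rather than $s$ is cosmetically different but equivalent, and your identifications of the fibers at finite $\beta$ (via Corollary~\ref{corollary: ses of Hodge on V}) and at $\infty$ (via the leading-coefficient map with kernel $F_kV^\alpha$) are correct and match the paper's computations.

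Two issues. First, a minor one: $M$ is not an $\mc{O}_X[s]/(s+\alpha)^N$-module, since $s$ raises the Hodge filtration; it is only a coherent $\mc{O}_X$-module. Theorem~\ref{thm: V filtration via power of fs} says the inverse system stabilises, not that $s$ preserves $M$. This claim is not used in your actual construction, so it is harmless.

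Second, and this is a genuine gap: your flatness argument does not work. You invoke ``constancy of the Hilbert polynomial'' and a ``length comparison'', but $X$ is an arbitrary smooth variety and the quotients $\mc{O}_X/I_k(\beta D)$ need not have finite length, so neither notion is available. Moreover, you have only computed the \emph{image} of the restricted generators at each fibre, not the restriction $\mc{I}\otimes k(\beta)$ itself; these differ precisely by $\mrm{Tor}_1^{\mc{O}_{\mb{P}^1}}(\mc{O}_{X\times\mb{P}^1}/\mc{I},k(\beta))$, which is what you need to show vanishes. The paper handles this as follows: let $\mc{I}'$ be the image ideal (your $\mc{I}$), use generic flatness to find a dense open $U\subseteq\mb{P}^1$ over which $\mc{O}/\mc{I}'$ is flat, and define $\mc{I}$ as the unique flat extension of $\mc{I}'|_{X\times U}$ to $X\times\mb{P}^1$ (explicitly, $\mc{I}=\mc{I}'|_{X\times U}\cap\mc{O}_{X\times\mb{P}^1}$). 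Then at each fibre of interest one checks directly, using the strictness in Corollary~\ref{corollary: ses of Hodge on V}, that $\mc{I}'|_\beta$ (the honest restriction, not just the image) is $I_k(\beta D)$ or $\tilde I_k(\alpha D)$; since these inject into $\mc{O}_X$, the flatness criterion over a smooth curve gives $\mc{I}=\mc{I}'$ there. This repair is straightforward, but as written your argument is incomplete.
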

To illustrate, consider $D=(x^2+y^3=0)$ and $\alpha\in [5/6,1)$. Then $\mathfrak{I}_{k,\alpha}$ is the algebraic family defined by
\[ \mathfrak{I}_{k,\alpha}|_{\mb{C}^2 \times \{\beta\}} = (x^3, x^2y^2, xy^3, y^4 - (2\beta + 1)x^2 y), \quad \text{for $\beta \in \mb{C}$}.\]
Taking the limit as $\beta \to \infty$ in the Hilbert scheme of finite length ideals in $\mb{C}^2$, we get
\begin{align*}
\mathfrak{I}_{k,\alpha}|_{\mb{C}^2 \times \{\infty\}} &= \lim_{\beta \to \infty} (x^3, x^2y^2, xy^3, y^4 - (2\beta + 1)x^2 y) \\
&= \lim_{\beta \to \infty}\left(x^3, x^2y^2, xy^3, y^5, \frac{1}{2\beta + 1}y^4 -  x^2 y\right)\\
&= (x^3, x^2y^2, xy^3, y^5, x^2y) \\
&= \tilde{I}_2(\alpha D).
\end{align*}

It is known that the higher multiplier ideals $\tilde{I}_{k}(\alpha D)$ are decreasing in $\alpha$ and left continuous: $\tilde{I}_{k}(\alpha D) \supseteq \tilde{I}_{k}(\beta D)$, whenever $\alpha \leq \beta$, and $\tilde{I}_{k}(\alpha D)=\tilde{I}_{k}((\alpha-\epsilon)D)$ for $0 < \epsilon \ll 1$; \cite[Proposition 3.7 and \S 1.7]{SY23}. These are not true for the Hodge ideals, i.e.\ they are incommensurable and not left continuous (see \eqref{eqn: Hodge and higher are not equal for cusp} for example). This gives another reason why $I_k(\alpha D)$ and $\tilde{I}_k(\alpha D)$ cannot be equal in general. However, Theorem \ref{thm:higher vs hodge} implies that this is the only obstruction.
\begin{corollary}\label{cor: obstruction to left continuity}
We have $\tilde{I}_k(\alpha D) = I_k(\alpha D)$
    if and only if  the Hodge ideals are left continuous at $\alpha$, i.e.
    \[I_k(\alpha D) = I_k((\alpha - \epsilon) D), \quad \textrm{for $0 < \epsilon \ll 1$}.\]
\end{corollary}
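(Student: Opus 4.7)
The proof is a direct application of Theorem~\ref{thm:higher vs hodge}. Let $\mc{I}\subseteq\mc{O}_{X\times\mb{P}^1}$ be the unique flat family from that theorem, so that $\mc{I}|_{\beta}=I_k(\beta D)$ for $\beta$ in a left half-open neighborhood $(\alpha-\epsilon_0,\alpha]$ of $\alpha$, and $\mc{I}|_{\infty}=\tilde{I}_k(\alpha D)$. The equation $\tilde{I}_k(\alpha D)=I_k(\alpha D)$ then translates to $\mc{I}|_\infty=\mc{I}|_\alpha$, while left continuity of the Hodge ideals at $\alpha$ translates to $\mc{I}$ being constant on $(\alpha-\epsilon_0,\alpha]$ with value $I_k(\alpha D)$. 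Both directions of the corollary will be handled by analysing the locus $Z=\{\beta\in\mb{P}^1\colon \mc{I}|_\beta=I_k(\alpha D)\}$, which is closed in $\mb{P}^1$ since the flat family $\mc{I}$ classifies a morphism from $\mb{P}^1$ to a separated moduli functor.

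For the direction $(\Leftarrow)$, assume the Hodge ideals are left continuous at $\alpha$, so $Z\supseteq(\alpha-\epsilon_0,\alpha]$. Since $Z$ is closed in $\mb{P}^1$ and contains an infinite set, and $\mb{P}^1$ is irreducible of dimension one, we conclude $Z=\mb{P}^1$. Thus $\mc{I}$ is globally constant, and in particular $\tilde{I}_k(\alpha D)=\mc{I}|_\infty=I_k(\alpha D)$.

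For the direction $(\Rightarrow)$, assume $\mc{I}|_\alpha=\mc{I}|_\infty=I_k(\alpha D)$, so that $Z\supseteq\{\alpha,\infty\}$. It suffices to show $Z=\mb{P}^1$, for then $\mc{I}$ is constant and $I_k(\beta D)=\mc{I}|_\beta=I_k(\alpha D)$ for $\beta$ in a left neighborhood of $\alpha$, yielding the desired left continuity. Equivalently, one needs to rule out the possibility that $Z$ is a proper (hence finite) closed subset of $\mb{P}^1$ containing $\{\alpha,\infty\}$: a priori the family $\mc{I}$ could hit the fiber $I_k(\alpha D)$ at several isolated points without being globally constant. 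This rigidity is the main technical obstacle. I expect it to be resolved by unpacking the construction of $\mc{I}$ in the proof of Theorem~\ref{thm:higher vs hodge}: the family arises as a Rees-type degeneration controlled by the Hodge filtration on $V^\alpha\iota_+\mc{O}_X$ and the operator $s+\alpha$, with $\mc{I}|_\infty$ playing the role of an associated graded. Such a degeneration can coincide with a finite fiber $\mc{I}|_\alpha$ only when the underlying filtration is trivial, forcing $\mc{I}$ to be the constant family and completing the proof.
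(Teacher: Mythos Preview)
The paper does not write out a proof of this corollary, stating only that it follows from Theorem~\ref{thm:higher vs hodge}. Your $(\Leftarrow)$ direction is correct and is presumably what is intended: if the Hodge ideals are left continuous at $\alpha$, the constant family with value $I_k(\alpha D)$ satisfies the characterising property of Theorem~\ref{thm:higher vs hodge}, so by the uniqueness clause $\mc{I}$ is this constant family and $\tilde I_k(\alpha D)=\mc{I}|_\infty=I_k(\alpha D)$. (Your more elaborate route via the closed locus $Z$ also works, though the closedness of $Z$ should be justified by the argument of Lemma~\ref{lem:unique extension} rather than by invoking a Hilbert scheme, which would require $X$ to be projective.)

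Your $(\Rightarrow)$ direction, however, is genuinely incomplete, as you yourself acknowledge. You correctly identify the obstacle: for an arbitrary flat family of ideals over $\mb{P}^1$, having $\mc{I}|_\alpha=\mc{I}|_\infty$ does not force constancy, since a non-constant morphism $\mb{P}^1\to\mrm{Hilb}$ can certainly hit the same point at two distinct parameters. Your heuristic that the family is a ``Rees-type degeneration'' and that such degenerations are rigid in this sense is plausible but is not a proof: you neither make precise which filtration is being degenerated, nor justify why coincidence of the special fibre with one general fibre forces triviality. Note that the cusp example in the paper already shows that the family is genuinely non-constant over $\mb{A}^1$ (the generator $y^4-(2\beta+1)x^2y$ moves with $\beta$), so whatever rigidity is needed must come from the specific structure of the family near $\infty$, not from the behaviour over the affine chart. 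Completing the argument seems to require returning to the construction in the proof of Theorem~\ref{thm:higher vs hodge} --- in particular, the fact that in the chart at $\infty$ the family is built from $F_{k+1}V^\alpha\subseteq\mc{O}_X(kD)\otimes\mb{C}[s]_{\le k}$ with $F_\bullet V^\alpha$ equal to the degree-in-$s$ filtration --- and extracting the rigidity from there. The black-box statement of Theorem~\ref{thm:higher vs hodge} alone does not appear to suffice.
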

This relates the two features of the example \eqref{eqn: Hodge and higher are not equal for cusp}: $I_2(\alpha D)\neq \tilde{I}_2(\alpha D)$ because $I_2(\alpha D)$ is not left continuous for $\alpha\in [5/6,1)$. The example $D=(x^3+y^3+z^3=0)\subseteq \C^3$ in \cite[Remark 9.8]{PopaICM} where $I_2(D)\neq \tilde{I}_2(D)$ can be explained in a similar fashion. These results allow us to obtain new computations of higher multiplier ideals using the corresponding Hodge ideals. For example, \cite[Example 11.7]{MPbirational} implies the following.
\begin{corollary}
If $x\in D$ is an ordinary singularity of multiplicity $m$ and $\dim X=km+r$ for a unique $0\leq r\leq m-1$, then
\[ \tilde{I}_k\left(\frac{j+r}{m}\cdot D\right)_x=\mathfrak{m}_x^{j}, \quad \textrm{if $0\leq j\leq \min(m-r,m-1)$}.\]
\end{corollary}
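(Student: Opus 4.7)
The plan is to deduce this from Corollary \ref{cor: obstruction to left continuity} together with the explicit computation of Hodge ideals at ordinary singularities in \cite[Example 11.7]{MPbirational}. More precisely, I would proceed as follows.

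First, I would recall from \cite[Example 11.7]{MPbirational} the formula for $I_k(\alpha D)_x$ when $x \in D$ is an ordinary singularity of multiplicity $m$ in $\dim X = km + r$. Writing $\alpha = (j+r)/m + \epsilon$ for $0 \leq j \leq \min(m-r, m-1)$ and $0 \leq \epsilon \ll 1$, the cited example expresses $I_k(\alpha D)_x$ as a power of the maximal ideal (times a suitable local generator), and in this range of parameters gives $I_k(\alpha D)_x = \mathfrak{m}_x^{j}$. The key point I would extract is that in the interval $\alpha \in [(j+r)/m, (j+r+1)/m)$, the ideal is constantly $\mathfrak{m}_x^j$, so in particular
\[ I_k\!\left(\frac{j+r}{m}\cdot D\right)_x = I_k\!\left(\left(\frac{j+r}{m}-\epsilon\right)\cdot D\right)_x = \mathfrak{m}_x^{j} \]
for $0 < \epsilon \ll 1$, because the ``previous'' constant value on $((j+r-1)/m,(j+r)/m]$ agrees with the subsequent one when $j$ lies in the stated range.

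Second, I would invoke Corollary \ref{cor: obstruction to left continuity}, which asserts that the equality $\tilde{I}_k(\alpha D) = I_k(\alpha D)$ holds precisely at the values of $\alpha$ where the Hodge ideals are left continuous. Combining this with the left continuity established in the first step, I would conclude
\[ \tilde{I}_k\!\left(\frac{j+r}{m}\cdot D\right)_x = I_k\!\left(\frac{j+r}{m}\cdot D\right)_x = \mathfrak{m}_x^{j}. \]

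The only real content is the left continuity statement, which must be read off from the explicit description in \cite[Example 11.7]{MPbirational}; the restriction $0 \leq j \leq \min(m-r,m-1)$ is exactly what ensures that we sit in the range where the formula is the simple power $\mathfrak{m}_x^j$ on both sides of $(j+r)/m$, so the jump at $(j+r)/m$ (if any) is trivial. I do not expect any serious obstacle beyond carefully matching indices between the conventions in \cite{MPbirational} and ours; given the reduction via Corollary \ref{cor: obstruction to left continuity}, no further Hodge-theoretic argument is required.
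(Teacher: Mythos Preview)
Your approach is correct and is exactly what the paper does: the corollary is stated immediately after Corollary~\ref{cor: obstruction to left continuity} with the remark that it follows from \cite[Example 11.7]{MPbirational}, and no further proof is given. The only point to be careful about is your description of the interval structure---as written, your claim that the value on $[(j+r)/m,(j+r+1)/m)$ is $\mathfrak{m}_x^j$ while the ``previous'' value on $((j+r-1)/m,(j+r)/m]$ also equals $\mathfrak{m}_x^j$ is not internally consistent (shifting $j\mapsto j-1$ in the first would give $\mathfrak{m}_x^{j-1}$ on the latter), so when you carry this out you should extract from \cite[Example 11.7]{MPbirational} precisely which half-open convention holds and verify that the Hodge ideal is genuinely constant (equal to $\mathfrak{m}_x^j$, independent of $\beta$) on a left neighbourhood $[\,(j+r)/m-\epsilon,\,(j+r)/m\,]$; once that is established, either Corollary~\ref{cor: obstruction to left continuity} or Theorem~\ref{thm:higher vs hodge} (constant family) finishes the argument immediately.
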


\subsection{Vanishing theorems}

In applications to algebro-geometric questions, Nadel-type vanishing theorems for higher multiplier and Hodge ideals play a crucial role, just as in the case of classical multiplier ideals. However, establishing these theorems is considerably more subtle than in the classical setting. For Hodge ideals, Musta\c{t}\u{a}--Popa first proved such results in \cite[Theorem F]{MPHodgeideal} for $\Z$-divisors, and later extended them to $\Q$-divisors in \cite[Theorem 12.1]{MPbirational}, under the additional assumption that the line bundle $\cO_X(\ell D)$ admits $\ell$-th roots. They conjectured that this assumption is unnecessary. This conjecture was subsequently proved by B. Chen, who established the following result and deduced from it the most general vanishing theorem for Hodge ideals \cite[Theorem 1.1]{Chenbingyi}. Let $X$ be a smooth projective variety of dimension $n$, and $B,D$ are effective divisors on $X$. 

\begin{thm}[{\cite[Theorem 1.2(1)]{Chenbingyi}}] \label{thm: Bingyi Chen}
If $B-\alpha D$ is ample, then
    \[ \mb{H}^i(X,\mathrm{Sp}_k(\cM_{\bullet}(\alpha D))\otimes_{\cO_X}\cO_X(B))=0,\quad \textrm{whenever $i>0$}.\]
\end{thm}
Here $\mathrm{Sp}_k(\cM_{\bullet}(\alpha D))$ denotes the complex 
\begin{equation}\label{eqn: spencer complex} [\cM_k(\alpha D)\otimes \wedge^n T_X \to \cM_{k+1}(\alpha D)\otimes \wedge^{n-1} T_X \to \cdots \cM_{k+n}(\alpha D)\otimes \wedge^0 T_X],
\end{equation}
where
\[ \cM_k(\alpha D) \colonequals \frac{\omega_X(kD)\otimes I_k(\alpha D)}{\omega_X((k-1)D)\otimes I_{k-1}(\alpha D)}.\]
The proof of these vanishing theorems relies on a careful analysis of the sheaves of logarithmic forms associated to $D$ and $B$.

In \S \ref{vanishing theorem}, we provide a much shorter proof of Theorem \ref{thm: Bingyi Chen}, using our characterization of Hodge ideals in \eqref{eqn: new point of view of Hodge ideals} together with the Kodaira vanishing theorem for twisted Hodge modules \cite{SY23,DV23}. Moreover, we strengthen Chen’s result by proving the following theorem.
\begin{thm}\label{thm: strengthen of Bingyi}
Suppose that either:
\begin{enumerate}
\item The tangent bundle of $T_X$ is trivial and $B - \alpha D$ is ample,
\item $X$ is a homogeneous variety and $B -\alpha D + \frac{1}{2}K_X$ is semi-ample, or
\item $X = \mb{P}^n$ and $B - \alpha D$ is ample.
\end{enumerate}
Then
\[ \mb{H}^i(X, \cM_k(\alpha D)\otimes_{\mc{O}_X}\mc{O}_X(B)) = 0, \quad \text{whenever $i > 0$}.\]
\end{thm}

\subsection{Functoriality of higher multiplier ideals}
In \cite{SY23}, Schnell and the second author asked how higher multiplier ideals behave under birational transformations. We answer this question in the present paper. Let $D$ be an effective divisor on $X$, and let $\pi:\tilde{X}\to X$ be a log resolution of $(X,D)$ such that $\pi$ is an isomorphism over $X\setminus D$ and $\pi^{\ast}D$ has simple normal crossings. Abusing notation, write $\iota$ for the total graph embeddings of both $D$ and $\pi^{\ast}D$. Then Theorem \ref{thm: V filtration via power of fs} implies that
\begin{equation*}
\bigoplus_k \gr^F_{k + 1} V^\alpha\iota_+\mc{O}_X = \mbf{R}\pi_*\left(\omega_{\tilde X/X} \otimes \mrm{Sym}(\pi^*T_X) \Lotimes_{\mrm{Sym}(T_{\tilde X})}\left(\bigoplus_k \gr^F_{k + 1} V^\alpha\iota_+\mc{O}_{\tilde X}\right)\right),
\end{equation*}
where $\pi^*T_X$ and $T_{\tilde X}$ have graded degree $1$. This gives an immediate answer to this question.
\begin{proposition}\label{prop: containment of higher multiplier ideal}
We have $ \pi_*(\omega_{\tilde X/X} \otimes \tilde{I}_{k}(\alpha \pi^{\ast}D)) \subseteq \tilde{I}_{k}(\alpha D)$.
\end{proposition}
Note that for $k=0$ this is an equality, recovering the classical birational formula for multiplier ideals, while for $k\geq 1$ equality need not hold. The proof is given in \S \ref{sec: functoriality higher multiplier ideal}. Moreover, the formula above provides, in principle, a solution to another question in \cite{SY23}, namely a (derived) birational transformation formula for higher multiplier ideals. This can be made more explicit using a recent result of Chen–Musta\c{t}\u{a} \cite[Theorem 4.1]{chenmustata}, who construct an explicit filtered resolution of $(V^{\alpha}\iota_{+}\cO_X,F_{\bullet})$ in the normal crossings case.

\subsection{Remarks on the foundations} \label{subsec:foundations}

In this paper, we have made free use of the theory of complex mixed Hodge modules. Since the foundations for this theory \cite{MHMproject} are unfinished at the time of writing, we briefly describe how to prove the same results in the framework of Saito \cite{Saito88, Saito90}. 

Let $X$ be a smooth variety and write $\mhm_{\mb{Q}}(X)$ for Saito's $\mb{Q}$-linear category of mixed Hodge modules on $X$. If $K \subseteq \mb{C}$ is any algebraic field extension of $\mb{Q}$, we may formally define a $K$-linear category $\mhm_K(X)$ of $K$-mixed Hodge modules as follows. First, suppose that $K/\mb{Q}$ is finite. Since $\mhm_{\mb{Q}}(X)$ is tensored over \emph{finite} dimensional $\mb{Q}$-vector spaces, we can define $\mhm_K(X) = \mrm{Mod}_K( \mhm_{\mb{Q}}(X))$ to be the category of $K$-modules in this category, i.e., objects $M \in \mhm_{\mb{Q}}(X)$ equipped with an action $K \otimes M \to M$ satisfying the usual axioms for a module over a ring. Note that this definition makes sense even if $K \not\subset \mb{R}$. In general, if $K$ is not a finite extension (e.g.\ $K = \bar{\mb{Q}}$), form the category $\operatorname{Ind} \mhm_{\mb{Q}}(X)$ of ind-objects: this category is tensored over the category of $\mb{Q}$-vector spaces of \emph{arbitrary} dimension. We let
\[ \mhm_K(X) \subseteq \mrm{Mod}_K(\operatorname{Ind} \mhm_{\mb{Q}}(X))\]
be the smallest abelian subcategory containing the free objects $K \otimes_{\mb{Q}} M$ for $M \in \mhm_{\mb{Q}}(X)$. Note that any object $M_K$ in $\mhm_K(X)$ is of the form $K \otimes_{K'} M_{K'}$ for some finite extension $K'/\mb{Q}$ and some $M_{K'} \in \mhm_{K'}(X)$.

Since it is defined by a formal categorical construction, the category $\mhm_K(X)$ inherits the six operations from those on $\mhm_\mb{Q}(X)$. Moreover, we have a ``forgetful functor'' from $\mhm_K(X)$ to the category of filtered regular holonomic $\ms{D}_X$-modules defined as follows. Every $M \in \mhm_K(X)$ has an underlying ind-filtered $\ms{D}_X$-module $(\mc{M}^{\mathit{big}}, F_\bullet)$, equipped with an action of the algebra $K \otimes_{\mb{Q}} \mb{C}$. We set
\[ (\mc{M}, F_\bullet\mc{M}) = (\mb{C} \otimes_{K \otimes_{\mb{Q}}\mb{C}} \mc{M}^{\mathit{big}}, \mb{C} \otimes_{K \otimes_{\mb{Q}}\mb{C}} F_\bullet\mc{M}^{\mathit{big}}).\]
When $K/\mb{Q}$ is finite, this is a direct summand of $(\mc{M}^{\mathit{big}}, F_\bullet)$. By reduction to this case, it is easy to see that $(\mc{M}, F_\bullet)$ is a filtered regular holonomic $\ms{D}$-module and that the functor $M \mapsto (\mc{M}, F_\bullet)$ is exact and faithful, commutes with proper pushforwards etc.

All the results of this paper hold with the $\bar{\mb{Q}}$-linear category $\mhm_{\bar{\mb{Q}}}(X)$ as a stand-in for the category $\mhm(X)$ of complex mixed Hodge modules, with the caveat that we must restrict to $\alpha \in \mb{Q}$ in Theorem \ref{thm: V filtration via power of fs} and its consequences (this is no great loss, as all objects in $\mhm_{\bar{\mb{Q}}}(X)$ have $V$-filtrations indexed by $\mb{Q}$). Indeed, the only property we need from complex mixed Hodge modules that does not already hold in $\mhm_{\mb{Q}}$ is that the local system $\mc{O}_{\mb{C}^*}t^\alpha$ should lift to an object in $\mhm(\mb{C}^*)$ for all $\alpha \in \mb{R}$. For $\alpha = n/d \in \mb{Q}$, we have such a lift in $\mhm_{\bar{\mb{Q}}}(\mb{C}^*)$, given by the eigenspace
\[ \pi_*(\bar{\mb{Q}}^H[1])_{T = e^{-2\pi i \alpha}},\]
where $\pi \colon \mb{C}^* \to \mb{C}^*$ is $z \mapsto z^d$ and $T$ is the automorphism induced by the deck transformation $z \mapsto e^{-2\pi i/d}z$. This idea is similar to \cite[\S 2]{MPbirational}.

\subsection*{Acknowledgements}

We thank Christian Schnell and Mircea Musta\c{t}\u{a} for useful discussions, and especially Bradley Dirks for detailed comments of the manuscript. R.Y. would like to thank Christian Sevenheck for questions which eventually led to Theorem \ref{thm:higher vs hodge}. This project began when both authors were visiting the Simons Center for Geometry and Physics at Stony Brook during the 2024 winter school ``New applications of mixed Hodge modules''. We would like to thank Christian Schnell and Bradley Dirks for organising this excellent event and for the invitation to attend.

\section{A Beilinson-type formula for the $V$-filtration} \label{sec:hypersurface V filtration}

\subsection{The $V$-filtration along a smooth divisor}

In this subsection, we recall the definition of the $V$-filtration of a holonomic $\ms{D}$-module along a smooth divisor. Roughly speaking, this attempts to capture the notion of ``order of vanishing'' in purely $\ms{D}$-module-theoretic terms.

Let $X$ be a smooth variety, $D \subseteq X$ a smooth divisor and $\mc{M}$ a holonomic left $\ms{D}_X$-module. For convenience, we fix a local equation $t = 0$ for $D$ and a vector field $\partial_t$ such that $[\partial_t, t] = 1$.

\begin{definition} \label{defn:V-filtration}
The \emph{$V$-filtration of $\ms{D}_X$} is the decreasing $\mb{Z}$-indexed filtration given by
\[ V^n \ms{D}_X = \{ P \in \ms{D}_X \mid P t^m \in (t^{m + n}) \text{ for all }m \geq 0\}.\]
A \emph{$V$-filtration on $\mc{M}$} is an $\mb{R}$-indexed filtration $\{V^\alpha\mc{M} \subseteq \mc{M} \mid \alpha \in \mb{R}\}$ such that
\begin{enumerate}[label=(\Roman*)]
\item \label{itm:V-filtration 1} $V^\bullet \mc{M}$ is exhaustive, decreasing and left continuous (i.e.\ $V^{\alpha - \epsilon}\mc{M} = V^\alpha\mc{M}$),
\item \label{itm:V-filtration 2} the set $\{\alpha \in \mb{R} \mid V^\alpha \mc{M} \neq V^{>\alpha}\mc{M}\}$ is discrete,
\item \label{itm:V-filtration 3} we have $V^n \ms{D}_X \cdot V^\alpha \mc{M} \subseteq V^{\alpha + n}\mc{M}$,
\item \label{itm:V-filtration 4} the operator $t : V^\alpha\mc{M} \to V^{\alpha + 1}\mc{M}$ is an isomorphism for $\alpha \gg 0$,
\item \label{itm:V-filtration 5} each $V^\alpha\mc{M}$ is a coherent $V^0\ms{D}_X$-module,
\item \label{itm:V-filtration 6} for all $\alpha \in \mb{R}$, the operator $\alpha - \partial_t t : \gr_V^\alpha\mc{M} \to \gr_V^\alpha \mc{M}$ is nilpotent.
\end{enumerate}
\end{definition}

Note that, given the last condition, the remaining conditions are equivalent to requiring that $V^\bullet \mc{M}$ be decreasing, exhaustive and finitely generated over $V^\bullet \ms{D}_X$. This definition agrees with the one in \cite{MPVfiltration} and differs from the one in \cite{DV23}, for example, by a shift by $1$ in indexing. Intuitively, $V^\alpha \mc{M}$ can be thought of as the set of sections vanishing to order at least $\alpha - 1$ along $D$.

The $V$-filtration is unique if it exists, which it always does when $\mc{M}$ underlies a mixed Hodge module. An $\mb{R}$-indexed $V$-filtration as in Definition \ref{defn:V-filtration} need not exist for a general holonomic $\ms{D}_X$-module (although there is always a $\mb{C}$-indexed one \cite{Sabbah1987}).

\subsection{Extension of the Hodge filtration}\label{sec: extension of Hodge filtration}

The $V$-filtration is used to define the extension of the Hodge filtration of a mixed Hodge module across a principal divisor. We recall how this works in this subsection.

First, recall that any mixed Hodge module has an underlying filtered left $\ms{D}_X$-module. By convention, we will denote the $\ms{D}_X$-module by the same letter as the mixed Hodge module, and the Hodge filtration by $F_\bullet$.

Let $X$ be a smooth variety, $D \subseteq X$ a divisor, $U = X \setminus D$, and $j : U \to X$ the inclusion. The six functor formalism for mixed Hodge modules provides two extension functors
\[ j_!, j_* : \mhm(U) \to \mhm(X),\]
which are respectively the left and right adjoint to the restriction functor. Since we work with algebraic $\ms{D}$-modules, the functor $j_*$ is given simply by sheaf-theoretic pushforward at the level of $\ms{D}$-modules. This is not the correct functor at the level of filtered $\ms{D}$-modules: if, for $\mc{N} \in \mhm(U)$, we set
\[ F_p^{\mathit{naive}}j_*\mc{N} = j_*(F_p\mc{N}),\]
then $F_p^{\mathit{naive}}j_*\mc{N}$ need not be coherent over $\mc{O}_X$, since sections are allowed to have poles of arbitrary order along $D$. To rectify this, the theory places bounds on these poles using the $V$-filtration as follows.

Assume for simplicity that the divisor $D$ is smooth. Then we may form the $V$-filtration $V^\bullet j_*\mc{N}$ along $D$ as in the previous subsection. Intersecting with $V^\alpha j_*\mc{N}$ gives a filtration $F_\bullet^{\mathit{naive}}V^\alpha \mc{N}$. The Hodge filtration $F_\bullet j_*\mc{N}$ is defined by
\[ F_pj_*\mc{N} = \sum_k F_{p - k}\ms{D}_X \cdot (F_k^{\mathit{naive}}V^0j_*\mc{N}).\]
It follows (see \cite[Proposition 6.14.2]{MHMproject}, c.f. \cite[3.2.2]{Saito88}) that
\begin{equation} \label{eq:hodge equals naive}
F_p V^\alpha j_*\mc{N} = F_p^{\mathit{naive}}V^\alpha j_*\mc{N} \quad \text{if $\alpha \geq 0$}.
\end{equation}
For the dual extension $j_!\mc{N}$, one can show that the natural map $j_!\mc{N} \to j_*\mc{N}$ induces an isomorphism
\[ V^{>0} j_!\mc{N} \overset{\sim}\to V^{>0}j_*\mc{N}\]
and thus a filtration $F_\bullet^{\mathit{naive}}V^{>0}j_!\mc{N}$. The Hodge filtration on $j_!\mc{N}$ is defined similarly by
\[ F_p j_!\mc{N} = \sum_k F_{p - k} \ms{D}_X \cdot (F_k^{\mathit{naive}} V^{>0}j_!\mc{N}).\]

\subsection{The main theorem}\label{sec: Msfs construction}

Fix as before a smooth complex variety $X$ and a (possibly singular) divisor $D \subseteq X$ with complement $U = X \setminus D$ and inclusion $j : U \to X$. We assume that $D$ is given by a local equation $f : X \to \mb{C}$ with graph embedding $\iota : X \to X \times \mb{C}_t$ sending $x$ to $(x, f(x))$. We fix a mixed Hodge module $\mc{M}$ on $X$ on which $f$ acts bijectively, or equivalently, so that $\mc{M} = j_*\mc{N}$ for some $\mc{N} \in \mhm(U)$.

Our aim is to write an expression for the $V$-filtration on $\iota_+\mc{M}$ along the smooth divisor $\{t = 0\}$ in terms of the Hodge filtrations on extension functors. The starting point is the following result of Malgrange (due in this form to Musta\c{t}\u{a} and Popa).

Consider the $\ms{D}_X[s]$-module $\mc{M}[s]f^s$ given by $\mc{M}[s]f^s=\mc{M}\otimes \C[s]$ as an $\cO_X[s]$-module, with $\ms{D}_X$-action defined by
\[ \xi \cdot (us^jf^s)=\left(\xi(u)s^j+\frac{\xi(f)}{f}us^{j+1}\right)f^s \]
for a vector field $\xi \in T_X$ and $u \in \mc{M}$. The formula makes sense since $f$ acts invertibly on $\mc{M}$. We equip $\mc{M}[s]f^s$ with the structure of a $\ms{D}_{X \times \mb{C}^*} = \ms{D}_X\langle t, t^{-1}, \partial_t t\rangle$-module by setting $s = -\partial_t t$ and
\[ t \cdot (us^jf^s) = u (s + 1)^j f^{s + 1} = fu (s + 1)^j f^s.\]

\begin{proposition}{\cite[Proposition 2.5]{MPVfiltration}}\label{prop: Malgrange isomorphism}
The morphism
    \begin{align}\label{eqn: Malgrange isomorphism}
    \mc{M}[s]f^s &\xrightarrow{\sim} \iota_{+}\mc{M}, \quad us^j f^s \mapsto u\otimes (-\d_tt)^j,
    \end{align}
is an isomorphism of $\sD_{X \times \mb{C}^*}$-modules with the inverse
    \begin{equation}\label{eqn: inverse Malgrange isomorphism}
    u\otimes \d_t^j \mapsto \frac{u}{f^j}\prod_{i=0}^{j-1}(-s+i)f^s.
    \end{equation}
\end{proposition}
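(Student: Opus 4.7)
The approach is to define the map $\phi\colon \mc{M}[s]f^s \to \iota_+\mc{M}$ by the formula $us^j f^s \mapsto (-\partial_t t)^j(u\otimes 1)$ (interpreting $u \otimes (-\partial_t t)^j$ in the statement as the element obtained by acting on $u \otimes 1$) and to verify in turn that $\phi$ is $\mc{O}_X$-linearly bijective, that it intertwines the $\sD_{X\times \mb{C}^*}$-actions on both sides, and that its inverse is given by the stated formula. Throughout, I would use the standard presentation $\iota_+\mc{M} = \mc{M}[\partial_t]$, in which $\partial_t$ acts by $u\otimes \partial_t^j \mapsto u\otimes \partial_t^{j+1}$, $t$ acts by $u\otimes \partial_t^j \mapsto fu\otimes \partial_t^j - ju\otimes \partial_t^{j-1}$, and a vector field $\xi\in T_X$ acts by $u\otimes \partial_t^j \mapsto \xi(u)\otimes \partial_t^j - \xi(f)u\otimes \partial_t^{j+1}$.

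For bijectivity, I would show that $\{(-\partial_t t)^j(u\otimes 1)\}_{j\geq 0,\, u\in \mc{M}}$ is an alternative $\mc{O}_X$-generating family of $\iota_+\mc{M}$. A short induction on $j$ using the commutation $t\partial_t = \partial_t t - 1$ yields
\[ (-\partial_t t)^j(u\otimes 1) = (-1)^j f^j u \otimes \partial_t^j + \text{(terms of lower $\partial_t$-degree)}. \]
Because $f$ acts bijectively on $\mc{M}$, the transition matrix between $\{u\otimes \partial_t^j\}$ and $\{(-\partial_t t)^j(u\otimes 1)\}$ is upper triangular with invertible diagonal, so $\phi$ is an isomorphism of $\mc{O}_X$-modules.

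For $\sD_{X\times \mb{C}^*}$-equivariance, I would check on a generating set. For $\xi \in T_X$: since $\xi$ commutes with both $t$ and $\partial_t$ in $\sD_{X\times \mb{C}}$, it commutes with $\partial_t t$, and then expanding the twisted $\sD_X$-action $\xi(us^jf^s) = \big(\xi(u)s^j + \tfrac{\xi(f)}{f}us^{j+1}\big)f^s$ and applying $\phi$ (using $\partial_t(w\otimes 1) = w \otimes \partial_t$ to rewrite $u \otimes \partial_t$), one finds that both sides of the equivariance identity reduce to $(-\partial_t t)^j\big(\xi(u)\otimes 1 - \xi(f)u\otimes \partial_t\big)$. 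For $t$: the identity $t \cdot g(\partial_t t) = g(\partial_t t - 1)\cdot t$ gives $t \cdot (-\partial_t t)^j(u\otimes 1) = (1-\partial_t t)^j(fu\otimes 1)$, matching $\phi\big(t \cdot us^j f^s\big) = \phi\big(fu(s+1)^j f^s\big)$. The action of $s = -\partial_t t$ is tautological from the definition.

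Finally, once $\phi$ is known to be a $\sD_{X\times \mb{C}^*}$-isomorphism, it is automatically $\partial_t$-equivariant since $\partial_t = t^{-1}(\partial_t t - 1)$ (note that $t$ is invertible on $\iota_+\mc{M}$: the equations $fw_k - (k+1)w_{k+1} = v_k$ can be solved from the top $\partial_t$-degree down). Hence $\phi^{-1}(u\otimes \partial_t^j) = \partial_t^j(uf^s)$, and a short induction on $j$ using $t^{-1}(g(s)f^s) = (g(s-1)/f)f^s$ and $(\partial_t t - 1)(g(s)f^s) = -(s+1)g(s)f^s$ produces the claimed formula $\partial_t^j(uf^s) = (u/f^j)\prod_{i=0}^{j-1}(-s+i)f^s$. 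The only real obstacle throughout is careful bookkeeping with the non-commuting operators, particularly establishing the triangular leading-term identity in the bijectivity step; no conceptual difficulty is involved.
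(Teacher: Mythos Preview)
Your proof is correct. The paper does not supply its own argument for this proposition; it simply cites \cite[Proposition 2.5]{MPVfiltration} and records the explicit inverse. Your direct verification---triangular leading terms for bijectivity (using that $f$ acts invertibly on $\mc{M}$), commutation of $\xi\in T_X$ with $\partial_t t$ together with the identity $(-\partial_t t)(\tfrac{\xi(f)}{f}u\otimes 1)=-\xi(f)u\otimes\partial_t$ for $\sD_X$-equivariance, the relation $t\,g(\partial_t t)=g(\partial_t t-1)\,t$ for $t$-equivariance, and the inductive computation of $\partial_t^j(uf^s)$ via $\partial_t=t^{-1}(\partial_t t-1)$---is exactly the standard computation underlying the cited result, and all steps check out.
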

Here we have used the explicit description of the graph embedding as 
\[ \iota_+ \mc{M} = \bigoplus_{k \geq 0} \mc{M} \otimes \partial_t^k.\]

Now consider the restrictions to $U$. Denote by $\iota_U:U\to U\times \C$ the restriction of the graph embedding. Endow $\mc{N}[s]f^s$ and $\iota_{U,+}\mc{N}$ with the filtrations
\[ F_p \mc{N}[s]f^s = \sum_{j + k = p} F_j\mc{N} s^k f^s \quad \text{and} \quad  F_p \iota_{U,+}\mc{N} = \sum_{j + k = p - 1} F_j \mc{N} \otimes \partial_t^k.\]
The latter is simply the Hodge filtration for the mixed Hodge module $\iota_{U,+}\mc{N}$ on $U \times \mb{C}$.

\begin{lemma} \label{lem:filtered Malgrange}
The isomorphism of Proposition \ref{prop: Malgrange isomorphism} defines a filtered isomorphism
\[ (\mc{N}[s]f^s, F_\bullet) \cong (\iota_{U,+} \mc{N}, F_{\bullet + 1}).\]
\end{lemma}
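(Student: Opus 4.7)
The plan is to verify the filtration shift by direct computation using the explicit Malgrange isomorphism and its inverse. Since $f$ is invertible on $U$, the sheaf $\mc{N}[s]f^s$ identifies canonically with $\mc{N} \otimes_{\mb{C}} \mb{C}[s]$ as an $\mc{O}_U$-module, and both filtrations are transparent:
\[ F_p \mc{N}[s]f^s = \sum_{k \geq 0} F_{p-k}\mc{N} \cdot s^k f^s, \qquad F_{p+1}\iota_+\mc{N} = \sum_{k \geq 0} F_{p-k}\mc{N} \otimes \partial_t^k, \]
where the right-hand expression uses the standard codimension-$1$ shift $F_p\iota_+\mc{N} = \bigoplus_k F_{p-k-1}\mc{N} \otimes \partial_t^k$.

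For the forward inclusion, I would take an arbitrary generator $us^k f^s$ of $F_p\mc{N}[s]f^s$ with $u \in F_{p-k}\mc{N}$; by Proposition \ref{prop: Malgrange isomorphism} its image in $\iota_+\mc{N}$ equals $(-\partial_t t)^k \cdot (u \otimes 1)$. The key observation is that $\partial_t t = 1 + t\partial_t$ is an order-$1$ operator in $\ms{D}_{U\times \mb{C}}$, so $(\partial_t t)^k$ has order $k$. Since the Hodge filtration on $\iota_+\mc{N}$ satisfies $F_k \ms{D}_{U\times\mb{C}} \cdot F_q \iota_+\mc{N} \subseteq F_{q+k}\iota_+\mc{N}$ (being the Hodge filtration of a mixed Hodge module), and $u \otimes 1 \in F_{(p-k)+1}\iota_+\mc{N}$ by the shift convention, it follows that $(-\partial_t t)^k (u \otimes 1) \in F_{p+1}\iota_+\mc{N}$, as required.

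For the reverse inclusion, I would start with a generator $u \otimes \partial_t^k$ of $F_{p+1}\iota_+\mc{N}$ (so $u \in F_{p-k}\mc{N}$) and apply the explicit inverse \eqref{eqn: inverse Malgrange isomorphism}: the image is $(u/f^k)\prod_{i=0}^{k-1}(-s+i)f^s$. Because $f^{-k} \in \mc{O}_U$ is a unit and the Hodge filtration is $\mc{O}_U$-stable, $u/f^k$ still lies in $F_{p-k}\mc{N}$. Expanding the degree-$k$ polynomial $\prod_{i=0}^{k-1}(-s+i)$ in $s$, the image becomes a sum of terms $c_i (u/f^k) s^i f^s$ with $0 \leq i \leq k$, and each lies in $F_{(p-k)+i}\mc{N}[s]f^s \subseteq F_p\mc{N}[s]f^s$.

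Combining the two inclusions with the fact that the underlying map of $\ms{D}$-modules is already an isomorphism (Proposition \ref{prop: Malgrange isomorphism}) yields the desired filtered isomorphism. The argument is essentially bookkeeping; the only conceptual ingredient is that $F_\bullet \iota_+\mc{N}$ is a good filtration with respect to the order filtration on $\ms{D}_{U\times \mb{C}}$, which is automatic from the mixed Hodge module structure. I do not anticipate any serious obstacle — the shift by $1$ traces back precisely to the codimension-$1$ convention built into $F_\bullet\iota_+\mc{N}$.
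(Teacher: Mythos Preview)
Your proof is correct and takes essentially the same approach as the paper: the paper's proof is a one-liner (``This follows directly from the construction since $1/f$ is a regular function on $U$''), and your argument simply unpacks this by checking both inclusions explicitly using the formulas for the isomorphism and its inverse. The key point in both is that $f^{-1} \in \mc{O}_U$, so $u/f^k \in F_{p-k}\mc{N}$ whenever $u \in F_{p-k}\mc{N}$.
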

\begin{proof}
This follows directly from the construction since $1/f$ is a regular function on $U$.
\end{proof}

Now, the key observation is that the (non-holonomic) filtered $\ms{D}_U$-module $\mc{N}[s]f^s$ can be approximated by mixed Hodge modules as follows. For any $\alpha \in \mb{R}$, we can write
\[ \mc{N} f^{-\alpha} \colonequals \frac{\mc{N}[s]f^s}{(s + \alpha)} =  \mc{N} \otimes f^*\left(\mc{O}_{\mb{C}^*}t^{-\alpha}\right).\]
Since $\alpha \in \mb{R}$, $\mc{O}_{\mb{C}^*}t^{-\alpha}$ is a unitary local system on $\mb{C}^*$, hence a complex Hodge module \cite[Theorem 16.2.1]{MHMproject}, so this defines a mixed Hodge module structure on $\mc{N}f^{-\alpha}$. More generally, for any $n > 0$, the $\ms{D}_{\mb{C}^*}$-module $\mc{O}_{\mb{C}^*}[s]t^s/(s + \alpha)^n$
underlies an admissible variation of mixed Hodge structure, with Hodge and weight filtrations given by
\[ F_p\left(\frac{\mc{O}_{\mb{C}^*}[s]t^s}{(s + \alpha)^n}\right) = \sum_{j \leq p} \frac{\mc{O}_{\mb{C}^*}s^jt^s}{(s + \alpha)^n} \quad \text{and} \quad W_w\left(\frac{\mc{O}_{\mb{C}^*}[s]t^s}{(s + \alpha)^n}\right) = \sum_{2k \geq -w}\frac{\mc{O}_{\mb{C}^*}(s + \alpha)^kt^s}{(s + \alpha)^n}.\]
This is the standard ``nilpotent orbit'' tensored with the unitary local system $\mc{O}_{\mb{C}^*}t^{-\alpha}$. So we have a natural mixed Hodge module structure on
\[ \frac{\mc{N}[s]f^s}{(s + \alpha)^n} = \mc{N} \otimes f^*\left(\frac{\mc{O}_{\mb{C}^*}[s]t^s}{(s + \alpha)^n}\right).\]
Note that this comes equipped with a (nilpotent) morphism of mixed Hodge modules
\[ s + \alpha : \frac{\mc{N}[s]f^s}{(s + \alpha)^n}(1) \to \frac{\mc{N}[s]f^s}{(s + \alpha)^n}\]
whose cokernel is $\mc{N}[s]f^s/(s + \alpha)^{n - 1}$. Moreover, since the Hodge filtration on $\mc{N}$ is bounded below, we have
\begin{equation} \label{eq:basic limit}
F_p \left(\frac{\mc{N}[s]f^s}{(s + \alpha)^n}\right) = F_p\mc{N}[s]f^s \quad \text{for $n \gg 0$}.
\end{equation}
Taking the union over all $p$, we recover the entire filtered $\ms{D}_U[s]$-module $\mc{N}[s]f^s$ from the inverse system of mixed Hodge modules $\{\mc{N}[s]f^s/(s + \alpha)^n\}_{n \geq 0}$.

This so far seems fairly trivial. However, applying the functors $j_!$ and $j_*$, we get something quite non-trivial. Following \cite[\S 3.3]{DV23}, we define
\begin{align*}
F_pj_{\ast}^{(-\alpha)}\cN[s]f^s\colonequals \varprojlim_n F_p j_{\ast}\left(\frac{\cN[s]f^s}{(s + \alpha)^n}\right), &\quad j_{\ast}^{(-\alpha)}\cN[s]f^s\colonequals \bigcup_p F_pj_{\ast}^{(-\alpha)}\cN[s]f^s,\\
F_pj_{!}^{(-\alpha)}\cN[s]f^s\colonequals \varprojlim_n F_p j_{!}\left(\frac{\cN[s]f^s}{(s+\alpha)^n}\right), &\quad j_{!}^{(-\alpha)}\cN[s]f^s\colonequals \bigcup_p F_pj_{!}^{(-\alpha)}\cN[s]f^s.
\end{align*}
By \eqref{eq:basic limit}, the inverse limits are constant for $n \gg 0$. Hence, each $j_!^{(-\alpha)}\mc{N}[s]f^s$ and $j_*^{(-\alpha)}\mc{N}[s]f^s$ is a quasi-coherent filtered $\ms{D}_X[s]$-module whose restriction to $U$ is $(\mc{N}[s]f^s, F_\bullet)$. In particular, we have tautological filtered morphisms
\begin{equation} \label{eq:tautological morphisms}
(j_!^{(-\alpha)}\mc{N}[s]f^s, F_\bullet) \to (j_*^{(-\alpha)}\mc{N}[s]f^s, F_\bullet) \to (j_*\mc{N}[s]f^s, j_*F_\bullet) = (\iota_+\mc{M}, F_{\bullet + 1}^{\mathit{naive}}),
\end{equation}
where the last equality is Proposition \ref{prop: Malgrange isomorphism} and Lemma \ref{lem:filtered Malgrange}.

\begin{thm}\label{thm: main thm in the text}
The morphisms \eqref{eq:tautological morphisms} are strict and injective, and induce identifications
\[ j_!^{(-\alpha)}\mc{N}[s]f^s \cong V^{> \alpha} \iota_+\mc{M} \quad \text{and} \quad j_*^{(-\alpha)}\mc{N}[s]f^s \cong V^\alpha \iota_+\mc{M} \]
for all $\alpha \in \mb{R}$.
\end{thm}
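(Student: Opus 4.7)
The plan is to prove the theorem in two stages: first identify the underlying $\sD_X[s]$-modules using the Malgrange isomorphism, then upgrade to the filtered level by induction on $n$.

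For the underlying $\sD_X[s]$-module identification, the key observation is that under the Malgrange isomorphism of Proposition \ref{prop: Malgrange isomorphism}, the operator $s + \alpha$ corresponds to $-(\partial_t t - \alpha)$, which by axiom \ref{itm:V-filtration 6} has generalized eigenvalue $\alpha - \beta$ on $\gr^\beta_V \iota_+\cM$. Hence $(s + \alpha)^n$ acts invertibly on $\gr^\beta_V$ for all $\beta \neq \alpha$; by a short argument using discreteness of the $V$-jumps, it also acts invertibly on the quotient $\iota_+\cM/V^\alpha\iota_+\cM$ and on $V^{>\alpha}\iota_+\cM$. From this I would deduce $V^\alpha\iota_+\cM + (s+\alpha)^n\iota_+\cM = \iota_+\cM$ and $V^\alpha\iota_+\cM \cap (s+\alpha)^n\iota_+\cM = (s+\alpha)^n V^\alpha\iota_+\cM$, yielding an isomorphism
\[ V^\alpha\iota_+\cM/(s+\alpha)^n V^\alpha\iota_+\cM \xrightarrow{\sim} \iota_+\cM/(s+\alpha)^n\iota_+\cM \cong \cM[s]f^s/(s+\alpha)^n, \]
where the right side is the underlying $\sD_X$-module of $j_*(\cN[s]f^s/(s+\alpha)^n)$. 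Taking $\bigcup_p \varprojlim_n F_p$ of both sides then recovers $V^\alpha\iota_+\cM$: boundedness of the Hodge filtration on $\cN$ together with \eqref{eq:basic limit} prevents the $(s+\alpha)$-adic completion from growing past $V^\alpha$ itself.

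For the filtered identification, I would induct on $n$ using the short exact sequence of mixed Hodge modules
\[ 0 \to \frac{\cN[s]f^s}{(s+\alpha)^{n-1}}\{-1\} \xrightarrow{s+\alpha} \frac{\cN[s]f^s}{(s+\alpha)^n} \to \cN \cdot f^{-\alpha} \to 0 \]
from the introduction. For the base case $n=1$, I would compute $F_\bullet j_*(\cN f^{-\alpha})$ using Saito's construction via the graph embedding: the Hodge filtration is determined by the $V$-filtration on $\iota_+ j_*(\cN f^{-\alpha})$ along $X\times\{0\}$, which under Malgrange becomes the $V$-filtration on $\iota_+\cM$. A careful unpacking identifies $F_p j_*(\cN f^{-\alpha})$ with $F_{p+1}\gr^\alpha_V\iota_+\cM = F_{p+1}V^\alpha\iota_+\cM/(s+\alpha)F_{p+1}V^\alpha\iota_+\cM$. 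The inductive step applies $j_*$ to the above SES: since $j_*$ is exact on mixed Hodge modules and morphisms in $\mhm(X)$ are Hodge-strict, we obtain a filtered strict exact sequence which, combined with the inductive hypothesis and the base case, identifies $F_\bullet j_*(\cN[s]f^s/(s+\alpha)^n)$ with the corresponding extension in $V^\alpha\iota_+\cM$ under the underlying module isomorphism from stage one.

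The $j_!$ case proceeds identically with $V^{>\alpha}\iota_+\cM$ in place of $V^\alpha\iota_+\cM$, using that Saito's Hodge filtration on $j_!$ is controlled by $V^{>0}$ of the graph pushforward instead of $V^0$. Strictness and injectivity of \eqref{eq:tautological morphisms} then follow directly from the identifications with $V$-filtration pieces, since the inclusions $V^{>\alpha}\iota_+\cM \hookrightarrow V^\alpha\iota_+\cM \hookrightarrow \iota_+\cM$ are strict by construction. The main obstacle I anticipate is the base case $n=1$: the identification $F_p j_*(\cN f^{-\alpha}) = F_{p+1}\gr^\alpha_V\iota_+\cM$ requires careful analysis of Saito's extension formula along the graph embedding, especially when $D$ is singular, where one must pass to $X\times\mb{C}_t$ to access a smooth divisor and correctly track the index shift coming from $\iota_+$.
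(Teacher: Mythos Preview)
Your approach differs substantially from the paper's and, though workable in principle, leaves the hardest step unresolved. The paper does \emph{not} attempt a direct level-by-level match of filtrations. Instead, it cites \cite[Lemma 3.5]{DV23} for the fact that the maps \eqref{eq:tautological morphisms} are strict injections, so that $U^\alpha := j_*^{(-\alpha)}\cN[s]f^s$ defines a decreasing, exhaustive, left-continuous filtration on $\iota_+\cM$ with $U^{>\alpha} = j_!^{(-\alpha)}\cN[s]f^s$; it then verifies the remaining $V$-filtration axioms \ref{itm:V-filtration 2}--\ref{itm:V-filtration 6} for $U^\bullet$ directly (discreteness from the discreteness of the locus where $j_! \neq j_*$; compatibility with $t$ from the mixed Hodge module isomorphism $t : \cN[s]f^s/(s+\alpha)^n \cong \cN[s]f^s/(s+\alpha+1)^n$; coherence by explicitly lifting filtered generators through the tower; nilpotence of $s+\alpha$ on $\gr^\alpha_U$ from the stabilisation of the Beilinson cokernel $\cC_n$). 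Uniqueness of the $V$-filtration then forces $U^\bullet = V^\bullet$. Crucially, the paper never uses Saito's formula for $F_\bullet j_*(\cN f^{-\alpha})$ as an \emph{input}: that formula, which is your base case, is instead recovered afterwards as Corollary \ref{corollary: ses of Hodge on V}.

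Your base case $n=1$ is therefore the entire difficulty: it is precisely the generalised Musta\c{t}\u{a}--Popa formula \eqref{eqn: Hodge ideal via Vfiltration} for arbitrary $\cM$. Your idea of using the shift isomorphism $\iota_+(\cM f^{-\alpha}) \cong \iota_+\cM$, which sends $V^0$ to $V^\alpha$ and matches the naive filtrations, is correct as far as it goes. But extracting $F_p(\cM f^{-\alpha})$ from $F_p\iota_+(\cM f^{-\alpha}) = \sum_k F_{p-k}\sD_{X\times\C}\cdot F^{\mathit{naive}}_kV^\alpha\iota_+\cM$ and identifying the result with the image of $F^{\mathit{naive}}_{p+1}V^\alpha$ under $ev_{s=-\alpha}$ is exactly the substantive computation carried out in \cite{MPVfiltration}, which you have not reproduced here. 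Note also a slip: $\gr^\alpha_V\iota_+\cM = V^\alpha/V^{>\alpha}$, not $V^\alpha/(s+\alpha)V^\alpha$; your inductive structure requires the latter quotient, not the former. Once the base case is granted, your inductive step and the passage to the filtered inverse limit do go through (strictness of $s+\alpha$ for $F^{\mathit{naive}}$ on $V^\alpha$ is elementary, as you can check over $U$ in the basis $(s+\alpha)^k$). The upshot is that your route reproves Musta\c{t}\u{a}--Popa first and deduces the theorem, while the paper's axiomatic route proves the theorem first and deduces Musta\c{t}\u{a}--Popa.
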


Since $F_\bullet^{\mathit{naive}}V^\alpha\iota_+\mc{M} = F_\bullet V^\alpha \iota_+\mc{M}$ for $\alpha \geq 0$ by \eqref{eq:hodge equals naive}, this implies Theorem \ref{thm: V filtration via power of fs}.

\begin{proof}
It was shown in \cite[Lemma 3.5]{DV23} that the morphisms \eqref{eq:tautological morphisms} are strict and injective and that
\[ U^\alpha  \iota_+\mc{M} \colonequals j_*^{(-\alpha)}\mc{N}[s]f^s \subseteq \iota_+\mc{M}\]
is a decreasing, exhaustive, left-continuous filtration such that
\[ U^{>\alpha} \iota_+\mc{M} = j_!^{(-\alpha)}\mc{N}[s]f^s.\]
The proof of this is not difficult: it reduces to a straightforward calculation in the case when $D$ is smooth using the definition of $j_!$ and $j_*$ in terms of the $V$-filtration. It therefore remains to check that $U^\alpha \iota_+\mc{M}$ satisfies conditions \ref{itm:V-filtration 2}-\ref{itm:V-filtration 6} of Definition \ref{defn:V-filtration}.

For \ref{itm:V-filtration 2}, by construction, $ j_!^{(-\alpha)}\mc{N}[s]f^s \neq  j_*^{(-\alpha)}\mc{N}[s]f^s$ if and only if $j_!\mc{N}f^{-\alpha} \neq j_*\mc{N}f^{-\alpha}$. This is a discrete set by, for example, \cite[Proposition 3.3]{DV23}.

For \ref{itm:V-filtration 3} and \ref{itm:V-filtration 4}, we note that the isomorphism $t : \mc{N}[s]f^s \cong \mc{N}[s]f^s$ descends to an isomorphism of mixed Hodge modules $\mc{N}[s]f^s/(s + \alpha)^n \cong \mc{N}[s]f^s/(s + \alpha + 1)^n$ for all $n$. It follows that
\[ t : U^\alpha \iota_+\mc{M} \overset{\sim}\to U^{\alpha + 1}\iota_+\mc{M} \quad \text{for all $\alpha$},\]
proving \ref{itm:V-filtration 4}. Since $U^\alpha \iota_+\mc{M} \subseteq \iota_+\mc{M}$ is a $\ms{D}_X[s]$-submodule by construction and $V^\bullet\ms{D}_{X \times \mb{C}}$ is generated over $\ms{D}_X$ by $t$ in degree $1$ and $-\partial_t t = s$ in degree $0$, this also implies \ref{itm:V-filtration 3}.

For \ref{itm:V-filtration 5}, we need to show that $j_{\ast}^{(-\alpha)}\cN[s]f^s$ is a \emph{coherent} $\sD_X\langle s,t\rangle$-module. In fact, we show that it is a coherent $\sD_X[s]$-module. Recall that $(j_{\ast}(\cN\cdot f^{-\alpha}),F_{\bullet})$ is a coherent filtered $\sD_X$-module so that we may choose local generators $u_i\in F_{p_i}j_{\ast}(\cN\cdot  f^{-\alpha})$ for $1\leq  i \leq r$. The map
\[ ev_{s=-\alpha}:j_{\ast}^{(-\alpha)}\cN[s]f^s \to j_{\ast}(\cN\cdot f^{-\alpha})\]
is filtered surjective; let us choose pre-images $\tilde{u}_i \in F_{p_i}j_{\ast}^{(-\alpha)}\cN[s]f^s$ of the $u_i$. Set $F_k(\sD_X[s])=\sum_{p+q\leq k}F_p\sD_X s^q$. We prove by induction on $n$ that the morphism
\[ \bigoplus_i F_{p-p_i}\frac{\sD_X[s]}{(s+\alpha)^n}\cdot \tilde{u}_i \to F_pj_{\ast}\frac{\cN[s]f^s}{(s+\alpha)^n}\]
is surjective for all $n$ and all $p$. Since $F_pj_*^{(-\alpha)}\mc{N}[s]f^s = F_pj_*\mc{N}[s]f^s/(s +\alpha)^n$ for $n \gg 0$, this implies that
\[ \bigoplus_i F_{p-p_i}(\sD_X[s])\cdot \tilde{u}_i \to F_pj_{\ast}^{(-\alpha)}\cN[s]f^s\]
is surjective for all $p$, and hence that the $\tilde{u}_i$ generate $ j_{\ast}^{(-\alpha)}\cN[s]f^s$ as a filtered $\sD_X[s]$-module.  This gives the coherence over $\sD_X[s]$.

The base case is $n=1$, which is true by construction. For $n>1$, we have the following commutative diagram
\[ \begin{tikzcd}
0\arrow[r] &\oplus_i F_{p-p_i-(n-1)}\sD_X u_i \arrow[r,"(s+\alpha)^{n-1}"] \arrow[d]& \oplus_i F_{p-p_i}\frac{\sD_X[s]}{(s+\alpha)^n} \tilde{u}_i \arrow[r] \arrow[d]&\oplus_i F_{p-p_i}\frac{\sD_X[s]}{(s+\alpha)^{n-1}}\tilde{u}_i \arrow[d]\arrow[r] &0\\
0\arrow[r] & F_{p-(n-1)}j_{\ast}(\cN \cdot f^{\alpha}) \arrow[r,"(s+\alpha)^{n-1}"] & F_pj_{\ast}\frac{\cN[s]f^s}{(s+\alpha)^n}\arrow[r] &F_pj_{\ast}\frac{\cN[s]f^s}{(s+\alpha)^{n-1}} \arrow[r] &0
\end{tikzcd}\]
with exact rows. By construction and induction, the first and third column are surjective, respectively. Therefore by the five lemma, the second morphism is also surjective.

Finally, to prove \ref{itm:V-filtration 6}, recall that by the existence of the $b$-function \cite[\S 2]{Beilinson} (see also \cite[\S 6]{DV22}),
\[ \mc{C}_n \colonequals \coker\left(j_!\left(\frac{\mc{N}[s]f^s}{(s + \alpha)^n}\right) \to j_*\left(\frac{\mc{N}[s]f^s}{(s + \alpha)^n}\right)\right) \]
$\mc{C}_n$ stabilises for $n \gg 0$. Furthermore, by (the proof of) \cite[Lemma 3.6]{DV23}, there exists $N\in \Z_{\geq 0}$ so that $(s+\alpha)^N$ annihilates $\mc{C}_n$ for all $n$. By the eventual constancy of the inverse systems defining $F_pj_!^{(-\alpha)}\mc{N}[s]f^s$ and $F_pj_*^{(-\alpha)}\mc{N}[s]f^s$ for fixed $p$ (which follows from \eqref{eq:basic limit}), and the fact that any morphism of mixed Hodge modules is strict with respect to the Hodge filtration, one has
\[ \gr_U^\alpha\iota_+\mc{M} = \coker\left(j_!^{(-\alpha)}\mc{N}[s]f^s \to j_*^{(-\alpha)}\mc{N}[s]f^s\right) = \mc{C}_n, \quad \textrm{for $n \gg 0$.} \]
 It follows that $(s + \alpha)^N$ annihilates $\gr_U^\alpha\iota_+\mc{M} $, so this proves \ref{itm:V-filtration 6}.
\end{proof}

\section{Applications to higher multiplier and Hodge ideals}\label{sec: higher multiplier and Hodge ideal}

In this section, we give an illustrative application of our main theorem to the theory of higher multiplier and Hodge ideals by proving a precise comparison between the two. We also take the opportunity to explain how these fit neatly into the framework of complex Hodge modules, and to clarify some points in the literature.

Throughout the section, we fix a smooth complex variety $X$ and an effective divisor $D \subseteq X$ and write $j : U = X \setminus D \hookrightarrow X$ for the inclusion of the complement. We assume for simplicity that $D$ is reduced and given as the divisor of a regular function $f : X \to \mb{C}$. 

\subsection{Higher multiplier and Hodge ideals}

The higher multiplier and Hodge ideals are defined as follows.

Denote by $\iota:X\to X\times \C$ the graph embedding of $f$. Then, for $k \in \mb{Z}_{\geq 0}$ and $\alpha \in \Q$, the associated \emph{higher multiplier ideal} $\tilde{I}_{k}(\alpha D)\colonequals \cI_{k,-\alpha}(D)$  is defined in \cite[Definition 3.2]{SY23} (which is equivalent to the microlocal multiplier ideals in \cite{Saito16}):
\begin{equation}\label{definition: higher multiplier ideals}
\tilde{I}_{k}(\alpha D)\otimes \partial_t^k=\gr^F_{k+1}V^{\alpha}\iota_{+}\cO_X \subseteq \gr^F_{k + 1}\iota_+\mc{O}_X = \mc{O}_X \otimes \partial_t^k,
\end{equation}
where
\[ F_{k + 1}\iota_{+}\cO_X=\sum_{0\leq \ell\leq k} \cO_X\otimes \d_t^\ell, \quad F_{k + 1}V^{\alpha}\iota_{+}\cO_X=F_{k + 1}\iota_{+}\cO_X\cap V^{\alpha}\iota_{+}\cO_X,\]
It is manifest from this definition that $\tilde{I}_{k}(\alpha D)$ is a sheaf of ideals.

Similarly, when $D$ is reduced, the \emph{Hodge ideal} $I_k(\alpha D)$ is defined by
\begin{equation} \label{eq:hodge ideal}
I_k(\alpha D) \otimes \mc{O}_X(kD)f^{-\alpha} = F_k j_*(\mc{O}_U f^{-\alpha}) =: F_k \mc{O}_X(*D) f^{-\alpha} \subseteq \mc{O}_X(*D)f^{-\alpha}
\end{equation}
for $k \in \mb{Z}_{\geq 0}$ and $\alpha \in \mb{Q}_{> 0}$. One way to see that $I_k(\alpha D)$ is indeed an ideal is to note that, under the isomorphism of Proposition \ref{prop: Malgrange isomorphism}, we have
\begin{equation} \label{eq:graph hodge in s coords}
F_{k + 1}\iota_+\mc{O}_X\overset{\sim}\to \sum_{\ell = 0}^k \mc{O}_X(\ell D) \binom{s}{\ell}f^s.
\end{equation}
Since $\alpha > 0$, by \cite[Lemma 3.1.7]{Saito88} $V^\alpha \iota_+\mc{O}_X(*D) = V^\alpha \iota_+\mc{O}_X \subseteq \iota_+\mc{O}_X$. So Corollary \ref{corollary: ses of Hodge on V} implies that
\[
F_k \mc{O}_X(*D)f^{-\alpha} = \mathit{ev}_{s = -\alpha}(F_{k + 1}V^\alpha \iota_+\mc{O}_X(*D)) \subseteq \mathit{ev}_{s = -\alpha}(F_{k + 1} \iota_+\mc{O}_X) = \mc{O}_X(k D) f^{-\alpha}\]
and hence $I_k(\alpha D) \subseteq \mc{O}_X$ is an ideal. This formula for the Hodge filtration on $\mc{O}_X(*D)f^{-\alpha}$ is originally due to Musta\c{t}\u{a} and Popa \cite[Theorem A]{MPVfiltration}.

Strictly speaking, \eqref{eq:hodge ideal} is different from the original definition of the Hodge ideals, which did not use the language of complex mixed Hodge modules. The two definitions nevertheless agree.

\begin{lemma}\label{lemma: two definitions of Hodge ideals agree}
Definition \eqref{eq:hodge ideal} agrees with the definition of Hodge ideals in \cite[\S 4]{MPbirational}.
\end{lemma}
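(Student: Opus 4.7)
The plan is to reduce both definitions to a common cyclic-cover construction and invoke compatibility of the forgetful functor from mixed Hodge modules to filtered $\sD$-modules. Both definitions equip the same underlying $\sD_X$-module $\mc{O}_X(*D) f^{-\alpha}$ with a filtration, so the only task is to match the Hodge filtrations.

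Write $\alpha = \ell/d$ with $d \geq 1$ and let $\pi : Y \to X$ be the degree-$d$ cyclic cover defined by $y^d = f$, with $E = \pi^{-1}(D)_{\mathrm{red}}$, complement $V = Y \setminus E$, open inclusion $j_V : V \hookrightarrow Y$, and induced \'etale map $\pi_U := \pi|_V : V \to U$. Let $\chi : \mu_d \to \mb{C}^*$ be the character $\zeta \mapsto \zeta^\ell$. By construction, the Musta\c{t}\u{a}--Popa definition of \cite[\S 4]{MPbirational} equips $\mc{O}_X(*D) f^{-\alpha}$ with the $\chi$-isotypic summand of Saito's Hodge filtration on $\pi_* \mc{O}_Y(*E)$.

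The crux is to identify our complex Hodge module $\mc{O}_U \cdot f^{-\alpha}$ on $U$ with the $\chi$-isotypic summand $\bigl((\pi_U)_* \mc{O}_V\bigr)_\chi$ in $\mhm_{\bar{\mb{Q}}}(U)$. For this I would apply proper base change along the Cartesian square whose right column is $z \mapsto z^d : \mb{C}^* \to \mb{C}^*$ and whose bottom row is $f : U \to \mb{C}^*$, starting from the formula $\mc{O}_{\mb{C}^*} t^{-\alpha} = p_*(\bar{\mb{Q}}^H[1])_{T = e^{-2\pi i \alpha}}$ from \S\ref{subsec:foundations} and tracking the deck-transformation action to pin down the correct eigenspace. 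Once this identification is in hand, applying $j_*$ and using that the forgetful functor to filtered $\sD_X$-modules commutes separately with proper pushforward (hence $\pi$, $\pi_U$) and with open pushforward (hence $j$, $j_V$) yields a chain of filtered identifications
\[ j_*(\mc{O}_U f^{-\alpha}) = \bigl(j_* (\pi_U)_* \mc{O}_V\bigr)_\chi = \bigl(\pi_* (j_V)_* \mc{O}_V\bigr)_\chi = \bigl(\pi_* \mc{O}_Y(*E)\bigr)_\chi, \]
where the middle step uses the equality of compositions $\pi \circ j_V = j \circ \pi_U$ and the last uses the identification $(j_V)_* \mc{O}_V \cong \mc{O}_Y(*E)$ of filtered $\sD_Y$-modules (the reduced, integer case of the lemma, on which both definitions agree tautologically from Saito's theory).

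The main obstacle is the base-change and eigenspace identification on $U$: translating the appendix description of $\mc{O}_{\mb{C}^*} t^{-\alpha}$ into the cover picture and aligning Tate twists, shifts and sign conventions so that the filtered summand that emerges is precisely the one used in \cite{MPbirational}, rather than a Galois-conjugate or a filtration-shifted variant. The remainder of the argument is formal six-functor bookkeeping once these conventions are fixed.
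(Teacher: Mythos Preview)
Your proposal is correct and follows essentially the same approach as the paper: both reduce to the cyclic cover $\pi$ and identify $\mc{O}_X(*D)f^{-\alpha}$ with the appropriate $\mu_d$-eigenspace of $\pi_*\mc{O}_Y(*\pi^{-1}(D))$ as complex mixed Hodge modules. The paper's version is slightly more compressed: it simply asserts that the eigenspace inclusion on $X$ is already an inclusion of complex mixed Hodge modules (so the restricted Hodge filtration is the intrinsic one), whereas you spell out this verification by first matching the objects over $U$ via the base-change description of $\mc{O}_{\mb{C}^*}t^{-\alpha}$ from \S\ref{subsec:foundations} and then applying $j_*$.
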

\begin{proof}
The Hodge ideal in \cite{MPbirational} is defined by
\[ I_k(\alpha D) \otimes \mc{O}_X(kD)f^{-\alpha} = F_k'\mc{O}_X(*D) f^{-\alpha},\]
where the filtration $F_\bullet'$ is defined as follows. Write $\alpha = \frac{m}{n}$ for $m, n \in \mb{Z}_{> 0}$. Extracting an $n$th root of $f$, we get an $n$-fold cyclic covering $\pi : \tilde X \to X$ with Galois group $\mu_n$ so that
\begin{equation} \label{eq:MP vs complex}
\mc{O}_X(*D)f^{-\alpha} \subseteq \pi_*\mc{O}_{\tilde X}(*\pi^{-1}(D))
\end{equation}
is the summand on which $\zeta \in \mu_n$ acts by $\zeta^{-m}$. The $\ms{D}$-module $\pi_*\mc{O}_{\tilde X}(*\pi^{-1}(D))$ underlies the (rational) mixed Hodge module $\pi_*\tilde{j}_*\mb{Q}^H [\dim X] = j_*\pi^\circ_*\mb{Q}^H[\dim X]$, where $\tilde{j} : \pi^{-1}(U) \to \tilde X$ is the inclusion and $\pi^\circ : \pi^{-1}(U) \to U$ is the restriction of $\pi$, fitting into the following diagram.
\[\begin{tikzcd} \pi^{-1}(U) \arrow[d,"\pi^\circ"] \arrow[r,"\tilde{j}"] & \tilde{X}\arrow[d,"\pi"]\\
U \arrow[r,"j"] & X\end{tikzcd}\]
Hence $\pi_*\mc{O}_{\tilde X}(*\pi^{-1}(D))$ comes with a Hodge filtration $F_\bullet$. 
The filtration $F_\bullet'$ is defined in \cite{MPbirational} as the restriction of this Hodge filtration to $\mc{O}_X(*D)f^{-\alpha}$ via \eqref{eq:MP vs complex}. But \eqref{eq:MP vs complex} is an inclusion of complex mixed Hodge modules (it is an inclusion of mixed Hodge modules on $U$, and thus is after applying $j_{\ast}$), so the filtration $F_{\bullet}'$ on $\mc{O}_X(*D)f^{-\alpha}$ agrees with the Hodge filtration $F_\bullet$ on $\mc{O}_X(*D)f^{-\alpha}$ in \eqref{eq:hodge ideal}, and hence the two definitions of Hodge ideals coincide.
\end{proof}

\begin{remark}
For $D$ reduced, the weighted Hodge ideals $I_k^{W_{\bullet}}(D)$, as a filtration of the Hodge ideal $I_k(D)$, have been introduced in \cite{Olano}. By Lemma \ref{lemma: two definitions of Hodge ideals agree}, we can use \eqref{eq:hodge ideal} to define weighted multiplier ideals for arbitrary $D$ and $\alpha\in \Q_{>0}$ by
\[ W_{\ell}I_k(\alpha D)\otimes \cO_X(kD)\cdot f^{-\alpha}=F_kW_{\dim X+\ell}j_{\ast}(\cO_Uf^{-\alpha}), \quad \forall \ell\geq 0.\]
One can prove that they generalise many properties of $I_k^{W_{\bullet}}(D)$ as in \cite{Olano}. However we have chosen not to include them in this note. For a more systematic treatment from this perspective, see \cite{dakin}. 
\end{remark}

\subsection{Comparison between higher multiplier and Hodge ideals} 

We now give the proof of Theorem \ref{thm:higher vs hodge} that the higher multiplier ideals can be recovered from the family of Hodge ideals by taking a limit. The proof is based on the observation above that both ideals can be written in terms of the same sheaf $F_{k + 1}V^\alpha \iota_+\mc{O}_X$.

We begin with some basic lemmas about families of ideals over curves. Recall from the introduction that if $X$ and $S$ are schemes over $\mb{C}$, then a \emph{family of ideals} over $S$ is a sheaf of ideals $\mc{I} \subseteq \mc{O}_{X \times S}$ such that the quotient $\mc{O}_{X \times S}/\mc{I}$ is flat over $S$. When $X$ is projective, this is the same data as a morphism from $S$ to the Hilbert scheme $\mrm{Hilb}(X)$.

\begin{lem} \label{lem:unique limit}
Let $X$ be a scheme over $\mb{C}$, $S$ a smooth curve and $U \subseteq S$ a dense open subset. Then any family of ideals $\mc{I}_U$ in $\mc{O}_X$ over $U$ extends uniquely to a family over $S$.
\end{lem}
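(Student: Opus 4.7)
My plan is to construct the extension explicitly as the kernel of the obvious map to a pushforward, and to exploit the fact that flatness over a smooth curve is equivalent to torsion-freeness.

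Let $j \colon X \times U \hookrightarrow X \times S$ denote the open immersion. For uniqueness, suppose $\mathcal{I} \subseteq \mathcal{O}_{X \times S}$ is any extension. Since $\mathcal{O}_{X\times S}/\mathcal{I}$ is flat over $S$ and $S$ is a smooth curve (so $\mathcal{O}_S$ is locally a Dedekind domain), the quotient is $\mathcal{O}_S$-torsion-free. For any $\mathcal{O}_S$-torsion-free quasi-coherent sheaf $\mathcal{F}$ on $X \times S$, the unit map $\mathcal{F} \to j_*j^*\mathcal{F}$ is injective: its kernel is set-theoretically supported on $X \times (S\setminus U)$, and over a neighborhood of any point of $S \setminus U$ a section of the kernel is annihilated by a power of a local parameter, which forces it to vanish. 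Applying this to $\mathcal{O}_{X\times S}/\mathcal{I}$ gives an injection into $j_*(\mathcal{O}_{X\times U}/\mathcal{I}_U)$, whence
\[ \mathcal{I} = \ker\bigl(\mathcal{O}_{X\times S} \to j_*(\mathcal{O}_{X\times U}/\mathcal{I}_U)\bigr).\]
This pins $\mathcal{I}$ down uniquely.

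For existence, take the right-hand side of the displayed formula as the definition of $\mathcal{I}$. Since $j \colon X \times U \to X \times S$ is a quasi-compact quasi-separated open immersion, $j_*(\mathcal{O}_{X\times U}/\mathcal{I}_U)$ is quasi-coherent, so $\mathcal{I}$ is quasi-coherent as the kernel of a map of quasi-coherent sheaves, and $\mathcal{O}_{X\times S}/\mathcal{I}$ is quasi-coherent. By construction, restriction to $X \times U$ recovers $\mathcal{I}_U$. The remaining point is flatness over $S$, equivalently $\mathcal{O}_S$-torsion-freeness. Since $\mathcal{O}_{X\times S}/\mathcal{I}$ embeds into $j_*(\mathcal{O}_{X\times U}/\mathcal{I}_U)$, it suffices to check that the latter is $\mathcal{O}_S$-torsion-free. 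A local section killed by some $0 \neq s \in \mathcal{O}_S$ restricts to a section of $\mathcal{O}_{X\times U}/\mathcal{I}_U$ that is killed by the nonzero function $s|_U$; by flatness of $\mathcal{O}_{X\times U}/\mathcal{I}_U$ over $U$ the restriction is zero, and then the argument of the previous paragraph (applied to $j_*(\mathcal{O}_{X\times U}/\mathcal{I}_U)$ itself, which is obviously $\mathcal{O}_S$-torsion-free on $U$ and the map $j_*\mathcal{G} \to j_*j^*j_*\mathcal{G} = j_*\mathcal{G}$ is the identity) forces it to vanish on $X \times S$.

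The main conceptual step is really the first one: identifying torsion-freeness over the smooth curve $S$ with flatness, which is what makes the pushforward construction both well-defined and canonical. The rest is formal manipulation with the unit of the adjunction $j^* \dashv j_*$. I do not expect any serious obstacle, but care is needed to ensure quasi-coherence and to verify torsion-freeness of the pushforward sheaf-theoretically rather than stalkwise, since $j_*$ need not commute with stalks at points of $S \setminus U$.
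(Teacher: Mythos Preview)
Your proof is correct and is essentially the same as the paper's. The paper works locally in the affine case and defines the extension by $\mc{I} = \mc{I}_U \cap (A \otimes R)$ inside $A \otimes R'$; your kernel $\ker(\mc{O}_{X\times S} \to j_*(\mc{O}_{X\times U}/\mc{I}_U))$ is exactly this formula rewritten globally, and both arguments hinge on the same observation that over a smooth curve flat equals torsion-free, so that submodules of flat modules are flat.
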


\begin{proof}
We may assume without loss of generality that $X = \spec A$, $S = \spec R$ and $U = \spec R'$ are affine. Note also that since $S$ is a smooth curve, a module over $R$ (or its localisation $R'$) is flat if and only if it is torsion-free; in particular every submodule of a flat module is flat. Now, to prove existence, let us write
\[ \mc{I} = \mc{I}_U \cap (A \otimes R) \subseteq (A \otimes R),\]
where we regard $\mc{I}_U$ as an ideal in $A \otimes R' \supseteq A \otimes R$. By construction, $(A \otimes R)/\mc{I} \to (A \otimes R')/\mc{I}_U$ is injective. Since $(A \otimes R')/\mc{I}_U$ is flat over $R'$, it is flat over $R$ and hence so is its submodule $(A \otimes R)/\mc{I}$. So $\mc{I}$ defines an extension to a family of ideals over $S$ as desired. To prove uniqueness, suppose that $\mc{I}'$ is another such extension of $\mc{I}_U$. Then clearly $\mc{I}' \subseteq \mc{I}$. So
\[ \mc{I}/\mc{I}' \subseteq (A \otimes R)/\mc{I}' \]
is an $R$-submodule supported on the complement of $U$. Since the $R$-module $(A\otimes R)/\mc{I}'$ is torsion-free, we therefore have $\mc{I}/\mc{I}' = 0$, so $\mc{I} = \mc{I}'$.
\end{proof}

\begin{lem} \label{lem:unique extension}
Let $X$ be a scheme of finite type over $\mb{C}$, $S$ a smooth curve and $Z \subseteq S$ a Zariski-dense subset. If $\mc{I}$ and $\mc{I}'$ are families of ideals in $\mc{O}_X$ over $S$ such that $\mc{I}_s = \mc{I}'_s$ for all $s \in Z$, then $\mc{I} = \mc{I}'$.
\end{lem}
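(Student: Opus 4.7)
The strategy is to show that the coherent sheaf $\mc{Q} \colonequals \mc{I}'/(\mc{I} \cap \mc{I}')$ vanishes; then $\mc{I}' \subseteq \mc{I}$, and the symmetric argument gives equality. Via the natural composition $\mc{I}' \hookrightarrow \mc{O}_{X \times S} \twoheadrightarrow \mc{O}_{X \times S}/\mc{I}$, we realize $\mc{Q}$ as a subsheaf of the $S$-flat quotient $\mc{O}_{X \times S}/\mc{I}$. Since $S$ is a smooth curve, any subsheaf of an $S$-flat coherent sheaf on $X \times S$ is $\mc{O}_S$-torsion-free and hence itself $S$-flat.

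Next I compute the fiber $\mc{Q}|_{X \times \{s\}}$ at any $s \in S$ where $\mc{I}_s = \mc{I}'_s$. By $S$-flatness of $\mc{O}_{X \times S}/\mc{I}$, the inclusion $\mc{Q} \hookrightarrow \mc{O}_{X \times S}/\mc{I}$ remains injective after restriction to $X \times \{s\}$, so $\mc{Q}|_{X \times \{s\}} \hookrightarrow \mc{O}_X/\mc{I}_s$. Tensoring the surjection $\mc{I}' \twoheadrightarrow \mc{Q}$ with $\kappa(s)$ over $\mc{O}_S$ then identifies $\mc{Q}|_{X \times \{s\}}$ with the image of the composite $\mc{I}'|_{X \times \{s\}} \to \mc{O}_X \to \mc{O}_X/\mc{I}_s$, which equals $(\mc{I}_s + \mc{I}'_s)/\mc{I}_s$ and vanishes by hypothesis.

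The crux, and what I expect to be the main obstacle, is the following general fact, which I would verify separately: a nonzero coherent $S$-flat sheaf $\mc{G}$ on $X \times S$ has $\mc{G}|_{X \times \{s\}} \neq 0$ for every $s \in S$. Granting this applied to $\mc{Q}$ forces $\mc{Q} = 0$, completing the proof. To establish the fact, first observe that $\pi : \supp(\mc{G}) \to S$ is surjective: its image is constructible, so if it were proper it would be finite, and then the stalk of $\mc{G}$ at any point over an isolated image $s_0$ would be annihilated by a power of the uniformizer of $\mc{O}_{S,s_0}$, contradicting $\mc{O}_S$-flatness. Second, for any $s \in S$ and any $(x,s) \in \supp(\mc{G})$, the stalk $M = \mc{G}_{(x,s)}$ is a nonzero finitely generated module over the local ring $\mc{O}_{X \times S,(x,s)}$, and since the uniformizer at $s$ lies in its maximal ideal, Nakayama's lemma yields $M/\mathfrak{m}_s M \neq 0$, i.e., $\mc{G}|_{X \times \{s\}}$ is nonzero at $x$.
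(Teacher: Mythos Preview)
There are two genuine gaps.

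First, the fibre computation is wrong. Flatness of $\mc{O}_{X \times S}/\mc{I}$ does \emph{not} guarantee that $\mc{Q} \hookrightarrow \mc{O}_{X \times S}/\mc{I}$ remains injective after restriction to $X \times \{s\}$: that would require flatness of the \emph{cokernel} $\mc{O}_{X \times S}/(\mc{I} + \mc{I}')$, which is not given. Concretely, take $X = S = \mb{A}^1$ with $\mc{I} = (x)$ and $\mc{I}' = (x - t)$ in $\mb{C}[x,t]$. Both quotients are $\mb{C}[t]$, hence $S$-flat, and $\mc{I}_0 = \mc{I}'_0 = (x)$. But $\mc{I} \cap \mc{I}' = (x(x - t))$, so $\mc{Q} = (x - t)/(x(x - t)) \cong \mb{C}[x, t]/(x)$ and $\mc{Q}|_{t = 0} \cong \mb{C} \neq 0$. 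Thus the implication ``$\mc{I}_s = \mc{I}'_s \Rightarrow \mc{Q}|_{X \times \{s\}} = 0$'' fails in general.

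Second, your ``general fact'' is false. The structure sheaf of the hyperbola $V(xt - 1) \subseteq \mb{A}^1_x \times \mb{A}^1_t$ is $\mb{C}[t, t^{-1}]$ as a $\mb{C}[t]$-module: nonzero, flat, coherent, with zero fibre at $t = 0$. The flaw in your argument is the assertion that a proper constructible subset of a curve must be finite; $\pi(\supp\mc{G}) = \mb{A}^1 \setminus \{0\}$ is a counterexample. (As an aside, the lemma as printed contains evident typos: the intended hypothesis, clear from the paper's proof and its application, is that $Z \subseteq S$ is Zariski-dense and $\mc{I}_s = \mc{I}'_s$ for all $s \in Z$.)

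The paper avoids both issues by working with the map $\varphi \colon \mc{I}' \to \mc{O}_{X \times S}/\mc{I}$ itself rather than its image $\mc{Q}$. After using generic freeness to pass to a dense open $U \subseteq S$ over which the target is a free $\mc{O}_U$-module, the set of $s \in U$ with $\varphi_s = 0$ is the common zero locus of the matrix coefficients, hence closed; since it contains the dense subset $Z \cap U$, it is all of $U$, so $\mc{I}'|_U \subseteq \mc{I}|_U$. By symmetry $\mc{I}|_U = \mc{I}'|_U$, and the preceding lemma on unique extension from a dense open finishes the proof.
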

\begin{proof}
We may assume without loss of generality that $X = \spec A$ is affine. Consider the quotients $\mc{O}_{X \times S}/\mc{I}$ and $\mc{O}_{X \times S}/\mc{I}'$. By generic freeness, there exists a dense open subset $U \subseteq S$ such that $\mc{O}_{X \times S}/\mc{I}|_U$ and $\mc{O}_{X \times S}/\mc{I}'|_U$ are free $\mc{O}_U$-modules. We claim that the set
\[ W = \{s \in U \mid \mc{I}'_{s} \subseteq \mc{I}_{s} \} \]
is closed in $U$. Indeed, $W$ is the vanishing locus of the map of free $\mc{O}_U$-modules
\[ \mc{O}_{X \times S}/\mc{I} |_U \to \mc{O}_{X \times S}/\mc{I}'|_U, \]
which, as the intersection of the vanishing loci of the matrix coefficients, is manifestly closed. Since $Z \cap U \subseteq W$ and $Z$ is Zariski-dense, we conclude that $W = U$, i.e., that $\mc{I}'|_U \subseteq \mc{I}|_U$. Interchanging the roles of $\mc{I}$ and $\mc{I}'$, we deduce that $\mc{I}|_U = \mc{I}'|_U$. We conclude that $\mc{I} = \mc{I}'$ by applying Lemma \ref{lem:unique limit}.
\end{proof}

We will also use the following flatness criterion.

\begin{lem} \label{lem:flatness criterion}
Let $X$ be a scheme over $\mb{C}$, $S$ a smooth curve and $\mc{I} \subseteq \mc{O}_{X \times S}$ a sheaf of ideals. Then $\mc{O}_{X \times S}/\mc{I}$ is $S$-flat at $s \in S$ if and only if the morphism $\mc{I}|_{X \times \{s\}} \to \mc{O}_{X \times \{s\}}$ is injective.
\end{lem}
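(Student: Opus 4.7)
The plan is to reduce this to the standard characterisation of flatness over a discrete valuation ring and then exploit the long exact sequence of $\mrm{Tor}$. Since $S$ is a smooth curve, for each $s \in S$ the local ring $\mc{O}_{S,s}$ is a DVR, and over a DVR any module $M$ is flat if and only if $\mrm{Tor}_1^{\mc{O}_{S,s}}(k(s), M) = 0$ (equivalently, multiplication by a uniformiser is injective on $M$). So checking $S$-flatness of $\mc{O}_{X \times S}/\mc{I}$ at $s$ amounts to verifying vanishing of this single $\mrm{Tor}$ on stalks at each $(x,s) \in X \times \{s\}$.

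The main computation is then to identify this $\mrm{Tor}$ sheaf with the kernel of the map in the statement. I would apply $-\otimes_{\mc{O}_S}^L k(s)$ to the short exact sequence
\[ 0 \to \mc{I} \to \mc{O}_{X \times S} \to \mc{O}_{X \times S}/\mc{I} \to 0. \]
Since the projection $X \times S \to S$ is flat, $\mc{O}_{X \times S}$ is $\mc{O}_S$-flat, so all higher $\mrm{Tor}$'s against it vanish. The long exact sequence therefore truncates to
\[ 0 \to \mrm{Tor}_1^{\mc{O}_S}(\mc{O}_{X \times S}/\mc{I}, k(s)) \to \mc{I}|_{X \times \{s\}} \to \mc{O}_{X \times \{s\}} \to (\mc{O}_{X \times S}/\mc{I})|_{X\times\{s\}} \to 0, \]
identifying the kernel of $\mc{I}|_{X \times \{s\}} \to \mc{O}_{X \times \{s\}}$ with $\mrm{Tor}_1^{\mc{O}_S}(\mc{O}_{X \times S}/\mc{I}, k(s))$. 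Combining with the first paragraph, injectivity of this map is equivalent to vanishing of the $\mrm{Tor}$ sheaf on $X \times \{s\}$, and hence to $S$-flatness of $\mc{O}_{X \times S}/\mc{I}$ at $s$.

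There is no real obstacle here; the only mildly delicate point is to be careful that ``$S$-flat at $s$'' is interpreted as flatness of the $\mc{O}_{S,s}$-module stalks at all points lying over $s$, and that the DVR criterion (flat $\Leftrightarrow$ torsion-free $\Leftrightarrow$ $\mrm{Tor}_1(k(s),-) = 0$) applies to arbitrary modules without finite generation hypotheses, since $\mc{O}_{S,s}$ is a PID.
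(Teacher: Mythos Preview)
Your proof is correct and follows essentially the same route as the paper: both apply the long exact sequence of $\mrm{Tor}$ to the short exact sequence $0 \to \mc{I} \to \mc{O}_{X \times S} \to \mc{O}_{X \times S}/\mc{I} \to 0$ to identify $\mrm{Tor}_1^{\mc{O}_S}(k(s), \mc{O}_{X\times S}/\mc{I})$ with the kernel of $\mc{I}|_{X\times\{s\}} \to \mc{O}_{X\times\{s\}}$. The only cosmetic difference is that the paper phrases the flatness criterion as vanishing of all $\mrm{Tor}_i$ and then uses flatness of $\mc{I}$ (as a torsion-free module over a smooth curve) to kill the higher ones, whereas you invoke the DVR criterion directly to reduce to $\mrm{Tor}_1$; both arguments are equivalent here.
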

\begin{proof}
The sheaf $\mc{O}_{X \times S}/\mc{I}$ is $S$-flat at $s$ if and only if $\mrm{Tor}_i^{\mc{O}_S}(\mc{O}_s, \mc{O}_{X \times S}/\mc{I}) = 0$ for $i > 0$. Since $S$ is a smooth curve and $\mc{I}$ is $\mc{O}_S$-torsion-free, $\mc{I}$ itself is flat over $S$. The lemma now follows from the long exact sequence associated to $0 \to \mc{I} \to \mc{O}_{X \times S} \to \mc{O}_{X \times S}/\mc{I} \to 0$.
\end{proof}

\begin{proof}[Proof of Theorem \ref{thm:higher vs hodge}]
Uniqueness follows from Lemma \ref{lem:unique extension} since every interval in $\mb{R}$ is Zariski-dense in $\mb{P}^1$. We next prove existence. To simplify the notation, write
\[ V^\gamma := V^\gamma \iota_+\mc{O}_X(*D) \quad \text{for $\gamma \in \mb{R}$}.\]
Since $\alpha > 0$, $V^\alpha \subseteq \iota_+\mc{O}_X$, so from the formula \eqref{eq:graph hodge in s coords}, we have
\begin{equation} \label{eq:higher vs hodge 1}
 F_{k + 1}V^\alpha \subseteq F_{k + 1}\iota_+\mc{O}_X \subseteq \mc{O}_X(k D) \otimes \mb{C}[s]_{\leq k} f^s,
\end{equation}
where $\mb{C}[s]_{\leq k}$ is the space of polynomials in $s$ of degree at most $k$. The idea of the construction is to define a family of ideals by taking the image of this subspace under evaluation at different values of $s$. More precisely, note that
\[ \mb{C}[s]_{\leq k} = H^0(\mb{P}^1, \mc{O}(k \cdot \infty)),\]
where we identify $-s$ with the coordinate $z$ on $\mb{P}^1$. So \eqref{eq:higher vs hodge 1} provides a morphism
\begin{equation} \label{eq:higher vs hodge 2}
 F_{k + 1} V^\alpha \otimes_{\mc{O}_X} \mc{O}_{X \times \mb{P}^1}(- k(D \times \mb{P}^1) - k (X \times \infty)) \to \mc{O}_{X \times \mb{P}^1}.
 \end{equation}
Let $\mc{I} \subseteq \mc{O}_{X \times \mb{P}^1}$ be the image of \eqref{eq:higher vs hodge 2}. By generic flatness, there exists a dense open $U \subseteq \mb{P}^1$ over which $\mc{O}_{X \times \mb{P}^1}/\mc{I}$ is flat. We let $\mathfrak{I}_{k,\alpha}$ be the unique extension of $\mc{I}|_{X \times U} $ to $X \times \mb{P}^1$ provided by Lemma \ref{lem:unique limit}.

We next check that the restriction of $\mathfrak{I}_{k,\alpha}$ to $z = \beta$ is $I_k(\beta D)$ for $\beta = \alpha - \epsilon$, $0 \leq \epsilon \ll 1$: note that $V^\beta = V^\alpha$ in this range. By construction, 
\begin{equation} \label{eq:higher vs hodge 3}
\mc{I}|_{X \times \{\beta\}} = \frac{F_{k + 1}V^\beta}{F_{k + 1}V^\beta \cap (s + \beta)\mb{C}[s]\cdot F_{k + 1}V^\beta} \otimes \mc{O}_X(-kD).
\end{equation}
Now, by Corollary \ref{corollary: ses of Hodge on V}, the morphism $s + \beta \colon V^\beta \to V^\beta\{-1\}$ is strict, so
\[ F_{k + 1}V^\beta \cap (s + \beta) V^\beta= (s + \beta)F_k V^\beta. \]
Hence, the denominator of \eqref{eq:higher vs hodge 3} satisfies
\[
(s + \beta)F_k V^\beta \subseteq F_{k + 1}V^\beta \cap (s + \beta) \mb{C}[s]\cdot F_{k + 1}V^\beta \subseteq F_{k + 1}V^\beta\cap (s + \beta) V^\beta = (s + \beta)F_kV^\beta.
\]
So by Corollary \ref{corollary: ses of Hodge on V}, \eqref{eq:higher vs hodge 3} becomes
\[ \mc{I}|_{X \times \{\beta\}} = \frac{F_{k + 1}V^\beta}{(s + \beta) F_k V^\beta} \otimes \mc{O}_X(-k D) = I_k(\beta D).\] 
Since $I_k(\beta D)$ injects into $\mc{O}_X$, we conclude that $\mc{O}_{X \times \mb{P}^1}/\mc{I}'$ is flat at $z = \beta$ by Lemma \ref{lem:flatness criterion}, and hence that
\[ \mathfrak{I}_{k,\alpha}|_{X \times \{\beta\}} = \mc{I}|_{X \times \{\beta\}} = I_k(\beta D) \quad \text{for $\beta = \alpha - \epsilon$, $0 \leq \epsilon \ll 1$}.\]

Finally, we show that the fibre of $\mathfrak{I}_{k,\alpha}$ at $z = \infty$ coincides with the higher multiplier ideal $\tilde{I}_k(\alpha D)$. We have
\begin{equation} \label{eq:higher vs hodge 5}
\mc{I}|_{X \times \{\infty\}} = \frac{F_{k + 1}V^\alpha}{F_{k + 1}V^\alpha \cap s^{-1}\mb{C}[s^{-1}]\cdot F_{k + 1}V^\alpha} \otimes \mc{O}_X(-kD)s^{-k}.
\end{equation}
Since $\alpha > 0$,
\[ F_{k + 1}V^\alpha = V^\alpha \cap \mc{O}_X(*D) \otimes \mb{C}[s]_{\leq k}f^s.\]
So
\[ F_{k + 1}V^\alpha \cap s^{-1}\mb{C}[s^{-1}]\cdot F_{k + 1}V^\alpha \subseteq V^\alpha \cap \mc{O}_X(*D) \otimes \mb{C}[s]_{\leq k -1}= F_kV^\alpha. \]
Conversely, $s F_k V^\alpha \subseteq F_{k + 1}V^\alpha$, so
\[ F_{k + 1}V^\alpha \cap s^{-1}\mb{C}[s^{-1}]\cdot F_{k + 1}V^\alpha =  F_kV^\alpha.\]
Plugging into \eqref{eq:higher vs hodge 5}, we get
\[ \mc{I}|_{X \times \{\infty\}} = \gr^F_{k + 1} V^\alpha \otimes \mc{O}_X(-k D) s^{-k} = \gr^F_{k + 1} V^\alpha \otimes \partial_t^{-k} = \tilde{I}_k(\alpha D)\]
Since $\tilde{I}_k(\alpha D)$ injects into $\mc{O}_X$, we conclude as above that $\mathfrak{I}_{k,\alpha}|_{X \times \{\infty\}} = \mc{I}|_{X \times \{\infty\}} = \tilde{I}_k(\alpha D)$, as claimed.
\end{proof}

\section{Vanishing theorems for Hodge ideals via twisted mixed Hodge modules} \label{vanishing theorem}

In contrast to the setting in \S \ref{sec: higher multiplier and Hodge ideal}, let us now drop the assumption that $D$ is given as the divisor of a global function $f : X \to \mb{C}$. In this setting, Hodge ideals do not come directly from the Hodge filtrations on any mixed Hodge modules. They do, however, come from Hodge filtrations on \emph{twisted} mixed Hodge modules. As this theory is not as well-known to singularity theorists as it could be, we include a brief discussion below. As an application, we give quick proofs for their vanishing theorems.

For any line bundle $\mc{L}$ on $X$ and any $\lambda \in \mb{C}$, there is a sheaf of rings $\ms{D}_{X, \lambda\mc{L}}$ on $X$, called the sheaf of \emph{$\lambda\mc{L}$-twisted differential operators}, constructed as follows. First, form the $\mb{G}_m$-torsor
\[ p : \mrm{Tot}(\mc{L})^\times = \spec_X\left(\bigoplus_{n \in \mb{Z}} \mc{L}^{\otimes n}\right) \to X,\]
with $\mb{G}_m$-action defined so that $\mc{L}^{\otimes n}$ has degree $n$. The sheaf of $\mb{G}_m$-invariant differential operators
\[ \widetilde{\ms{D}}_{X, \mc{L}} := p_*(\ms{D}_{\mrm{Tot}(\mc{L})^\times})^{\mb{G}_m}\]
is a quasi-coherent sheaf of algebras on $X$, with centre $\mb{C}[\theta]$ generated by the derivative $\theta$ of the $\mb{G}_m$-action (acting on $\mc{L}^{\otimes n}$ by multiplication by $n$). The sheaf of twisted differential operators is defined by
\[ \ms{D}_{X, \lambda \mc{L}} \colonequals \widetilde{\ms{D}}_{X, \mc{L}} \otimes_{\mb{C}[\theta], \lambda} \mb{C} =\widetilde{\ms{D}}_{X, \mc{L}}/(\theta -\lambda).\]
Any trivialisation of $\mc{L}$ determines an isomorphism $\widetilde{\ms{D}}_{X, \mc{L}}= \ms{D}_X[\theta]$ and hence $\ms{D}_{X, \lambda \mc{L}} \cong \ms{D}_X$. In particular, $\ms{D}_{X, \lambda \mc{L}}$ is locally isomorphic to the sheaf of differential operators, but not necessarily globally. The definition is cooked up so that $\mc{L}$ itself is canonically a $\ms{D}_{X, \mc{L}}$-module, even though it need not admit any structure of a $\ms{D}_X$-module.

One can define $\lambda\mc{L}$-twisted Hodge modules by copying the definition of ordinary complex Hodge modules with $\ms{D}_{X, \lambda\mc{L}}$ in place of $\ms{D}_X$. This works because $\ms{D}_{X, \lambda\mc{L}}$ is locally isomorphic to $\ms{D}_X$ and the definition of Hodge module is local. See, for example, \cite[\S 2]{SY23} for the polarised case. Alternatively, one can construct the twisted theory inside the untwisted theory as follows.

By descent, the category $\qcoh(\widetilde{\ms{D}}_{X, \mc{L}})$ of quasi-coherent $\widetilde{\ms{D}}_{X, \mc{L}}$-modules is equivalent to the category $\qcoh^{\mb{G}_m}(\ms{D}_{\mrm{Tot}(\mc{L})^\times})$ of quasi-coherent $\mb{G}_m$-equivariant modules over the $\mb{G}_m$-equivariant sheaf of rings $\ms{D}_{\mrm{Tot}(\mc{L})^\times}$ (this is sometimes called ``weak'' equivariance, as it is weaker than the usual notion of equivariance for $\ms{D}$-modules). The functor in one direction is pullback $p^*$ and in the other is $p_*(-)^{\mb{G}_m}$. A less obvious fact (see, e.g., \cite[Lemma 2.5.4]{BBjantzen}) is that the pullback functor
\begin{equation} \label{eq:twisted pullback}
 p^* : \qcoh(\ms{D}_{X, \lambda\mc{L}}) \subseteq \qcoh(\widetilde{\ms{D}}_{X, \mc{L}}) \to \qcoh(\ms{D}_{\mrm{Tot}(\mc{L})^\times})
\end{equation}
is also fully faithful, without any equivariance on the target. In other words, for fixed $\lambda$, there is at most one $\mb{G}_m$-action on a $\ms{D}_{\mrm{Tot}(\mc{L})^\times}$-module $\mc{M}$ such that $p_*(\mc{M})^{\mb{G}_m}$ is a $\mc{D}_{X, \lambda \mc{L}}$-module.

This motivates the definition of twisted mixed Hodge module below.

\begin{definition}
A \emph{$\lambda\mc{L}$-twisted mixed Hodge module on $X$} is a mixed Hodge module on $\mrm{Tot}(\mc{L})^\times$ whose underlying $\ms{D}_{\mrm{Tot}(\mc{L})^\times}$-module lies in the image of \eqref{eq:twisted pullback}. We write $\mhm(\ms{D}_{X, \lambda\mc{L}})$ for the category of such things.
\end{definition}

This is the definition used in \cite{DV23}, for example. The category $\mhm(\ms{D}_{X, \lambda\mc{L}})$ is zero unless $\lambda \in \mb{R}$ (e.g., \cite[Proposition 2.6]{DV23}). By \cite[Proposition 2.9]{DV23}, for example, the Hodge filtration on the $\ms{D}_{\mrm{Tot}(\mc{L})^\times}$-module underlying an object in $\mhm(\ms{D}_{X, \lambda\mc{L}})$ is preserved by the $\mb{G}_m$-action, and hence descends to a Hodge filtration $F_\bullet \mc{M}$ on the associated $\ms{D}_{X, \lambda\mc{L}}$-module. 

Now let us relate twisted mixed Hodge modules to global Hodge ideals. Let $D$ be an effective divisor on $X$ with a section $f\in H^0(X,\cL)$ where $\cL=\cO_X(D)$. Let $\cM$ be a mixed Hodge module where local equations of $D$ act bijectively. The tautological section of $\cO_X(D)$ defines a function
\[ f : \mrm{Tot}(\mc{L})^\times \to \mb{C}\]
on which $\mb{G}_m$ (and hence $\theta$) acts with weight $1$. For $\alpha \in \mb{R}$, the tautological $\mb{G}_m$-action on the $\ms{D}_{\mrm{Tot}(\mc{L})^\times}$-module $p^*\mc{M}$ gives a weak $\mb{G}_m$-action on $p^*\mc{M} \cdot f^{\beta}$ such that $\theta$ acts on
\[ \mc{M} \cdot f^{\beta} \colonequals p_*(p^*\mc{M} \cdot f^{\beta})^{\mb{G}_m} \]
by $\beta$. In other words, $\mc{M} \cdot f^{\beta}$ is a $\ms{D}_{X,\beta\mc{L}}$-module. Note that any local equation $g$ for $D$ determines a trivialisation $\mc{L} \cong \mc{O}_X$ sending $f$ to $g$ and the $\ms{D}_{X, \beta\mc{L}}$-module $\mc{M}\cdot f^{\beta}$ to the $\ms{D}_X$-module $\mc{M} \cdot g^{\beta}$, so this agrees with the construction in \cite[\S 2]{SY23}. Since $p^*\mc{M} \cdot f^{\beta}$ underlies a mixed Hodge module on $\mrm{Tot}(\mc{L})^\times$, we deduce:
\begin{proposition} \label{prop: twisting by a line bundle}
$\cM\cdot f^{\beta}$ underlies a $\beta \mc{L}$-twisted mixed Hodge module.
\end{proposition}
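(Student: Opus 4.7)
The plan is to construct a mixed Hodge module on $\mrm{Tot}(\mc{L})^\times$ whose underlying $\ms{D}$-module identifies with the pullback of $\mc{M} \cdot f^{\beta}$ under the embedding \eqref{eq:twisted pullback}, thereby exhibiting $\mc{M} \cdot f^{\beta}$ as a $\beta\mc{L}$-twisted mixed Hodge module by definition.

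First, I would observe that the tautological function $f : \mrm{Tot}(\mc{L})^\times \to \mb{C}$ has zero locus precisely $p^{-1}(D)$, and locally — after trivialising $\mc{L}$ — is given by a local equation of $D$ times the invertible fibre coordinate $z$. Since local equations of $D$ act bijectively on $\mc{M}$ by hypothesis, $f$ acts bijectively on the mixed Hodge module $p^*\mc{M}$ on $\mrm{Tot}(\mc{L})^\times$ (obtained by smooth pullback from $X$). This means the construction recalled in Section \ref{sec: Msfs construction} applies to the pair $(p^*\mc{M}, f)$, so that
\[ \widetilde{\mc{M}} \colonequals p^*\mc{M} \cdot f^{\beta} = p^*\mc{M} \otimes f^*(\mc{O}_{\mb{C}^*}t^{\beta}) \]
is naturally a mixed Hodge module on $\mrm{Tot}(\mc{L})^\times$: indeed, for $\beta \in \mb{R}$ the unitary rank-one local system $\mc{O}_{\mb{C}^*}t^{\beta}$ underlies a complex Hodge module, and tensoring with its pullback defines a mixed Hodge module structure on the underlying $\ms{D}$-module $p^*\mc{M}\cdot f^{\beta}$.

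Second, I would equip $\widetilde{\mc{M}}$ with a weak $\mb{G}_m$-equivariant structure, obtained as the tensor product of the tautological equivariant structure on $p^*\mc{M}$ (for which $\theta$ acts by $0$) with the structure on the line generated by $f^{\beta}$ (for which $\theta$ acts by $\beta$, inherited from the weight-one $\mb{G}_m$-action on $f$). Under the equivalence $\qcoh^{\mb{G}_m}(\ms{D}_{\mrm{Tot}(\mc{L})^\times}) \cong \qcoh(\widetilde{\ms{D}}_{X, \mc{L}})$, this exhibits $\widetilde{\mc{M}}$ as the pullback of a $\widetilde{\ms{D}}_{X, \mc{L}}$-module on which $\theta$ acts by the scalar $\beta$, i.e.\ of a $\ms{D}_{X, \beta\mc{L}}$-module; by construction this descended module is $\mc{M} \cdot f^{\beta} = p_*(\widetilde{\mc{M}})^{\mb{G}_m}$. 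Hence the underlying $\ms{D}$-module of $\widetilde{\mc{M}}$ lies in the image of \eqref{eq:twisted pullback}, and $\widetilde{\mc{M}}$ is a $\beta\mc{L}$-twisted mixed Hodge module on $X$ per the definition.

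The main obstacle is the verification that the $\mb{G}_m$-equivariant structure on the $\ms{D}$-module $\widetilde{\mc{M}}$ really does have $\theta$ acting by the scalar $\beta$ — the rest of the argument is formal bookkeeping, invoking full faithfulness of \eqref{eq:twisted pullback} and the standard properties of descent along the $\mb{G}_m$-torsor $p$. This step is ultimately a local calculation: after trivialising $\mc{L}$ so that $f = gz$ for $g$ a local equation of $D$, one computes the action of $\theta$ on the generator $f^{\beta}$ and sees it is multiplication by $\beta$, while $\theta$ visibly annihilates sections pulled back from $X$. Once this is in place, the proposition follows.
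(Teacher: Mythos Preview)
Your proposal is correct and follows essentially the same approach as the paper: the paper's argument (given in the text immediately preceding the proposition rather than in a separate proof environment) is precisely to observe that $p^*\mc{M}\cdot f^{\beta}$ is a mixed Hodge module on $\mrm{Tot}(\mc{L})^\times$ and that the tautological weak $\mb{G}_m$-action on $p^*\mc{M}$ induces one on $p^*\mc{M}\cdot f^{\beta}$ with $\theta$ acting by $\beta$, so that the underlying $\ms{D}$-module lies in the image of \eqref{eq:twisted pullback}. Your write-up simply unpacks these steps in more detail, including the local computation of the $\theta$-action.
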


In particular, we have an associated Hodge filtration
\[ F_\bullet(\mc{M} \cdot f^{\beta}) \colonequals p_*(F_\bullet(p^*\mc{M} \cdot f^{\beta}))^{\mb{G}_m}.\]
It follows that the Hodge ideal $I_k(\alpha D)$ can be defined by a global version of \eqref{eqn: new point of view of Hodge ideals}:
    \[ I_k(\alpha D)\otimes \cO_X(kD)\cong I_k(\alpha D)\otimes \cO_X(kD)\cdot f^{-\alpha}\colonequals F_k(\cO_X(\ast D)\cdot f^{-\alpha}), \quad \forall \alpha>0,\]
where in the first isomorphism we identify $\cO_X(\ast D)\cdot f^{-\alpha}$ with $\cO_X(\ast D)$ as a quasi-coherent sheaf on $X$ by ignoring $f^{-\alpha}$.

As mentioned in the introduction, in \cite{Chenbingyi}, Bingyi Chen introduces \[ \cM_k(\alpha D)= \frac{\omega_X(kD)\otimes I_k(\alpha D)}{\omega_X((k-1)D)\otimes I_{k-1}(\alpha D)},\]
and a Spencer complex $\mathrm{Sp}_k(\cM_{\bullet}(\alpha D))$. It is direct to see that 
\[
\cM_k(\alpha D) =\omega_X\otimes \gr^F_k(\cO_X(\ast D)\cdot f^{-\alpha}),\quad \mathrm{Sp}_k(\cM_{\bullet}(\alpha D))=\gr^F_k\DR(\cO_X(\ast D)\cdot f^{-\alpha}),
\]
where $\gr^F_k\DR(\cM)$ stands for the graded piece of the de Rham complex of a twisted $\sD$-module  (see e.g. \cite[\S 2.7]{SY23}). 
\begin{proof}[Proof of Theorem \ref{thm: Bingyi Chen}]
Since $\cO_X(\ast D)\cdot f^{-\alpha}\otimes \cO_X(B)$ is $(B-\alpha L)$-twisted by Proposition \ref{prop: twisting by a line bundle}, what we want follows from the twisted Kodaira vanishing \cite[Theorem 7.2]{DV23} (see also \cite[Theorem 1.5]{SY23}). 
\end{proof}

\begin{proof}[Proof of Theorem \ref{thm: strengthen of Bingyi}]
The first case follows by the same argument as the proof of \cite[Theorem 1.7]{SY23}. The second case is a special case of \cite[Theorem 1.3]{DV23} (when $X$ is a full flag variety) or \cite[Theorem 1.11]{DMB} (when $X$ is a partial flag variety). The third case follows from Theorem \ref{thm: Bingyi Chen} using the Euler sequence as in the case of untwisted Hodge modules \cite[Remark 2.34, 4)]{Saito90}.
\end{proof}

\section{Functoriality for higher multiplier ideals}\label{sec: functoriality higher multiplier ideal}

In this section, we observe that the functoriality for mixed Hodge modules gives, in principle, a method for computing higher multiplier ideals from a log resolution. This resolves some questions of Schnell and the second-named author (see Question 10.1 and Problem 10.2 of the first \texttt{arXiv} version of \cite{SY23}).

Recall that if $\pi : Y \to X$ is a projective morphism, then the direct image of a mixed Hodge module $\mc{M}$ on $Y$ is given at the level of filtered $\ms{D}$-modules by Laumon's formula
\begin{equation} \label{eq:laumon}
(\pi_*\mc{M}, F_\bullet) \colonequals \mbf{R}\pi_*((\ms{D}_{X \leftarrow Y}, F_\bullet) \Lotimes_{(\ms{D}_Y, F_\bullet)} (\mc{M}, F_\bullet)),
\end{equation}
as an object in the filtered derived category of $\ms{D}_X$-modules. Here we give the bimodule $\ms{D}_{Y \leftarrow X}$ the filtration by order of differential operator starting in degree $\dim X - \dim Y$. It is an important theorem of Saito (see \cite[Th\'eor\`eme 1]{Saito88}, \cite[Theorem 2.14]{Saito90} and \cite[Theorem 14.3.1]{MHMproject}), essential for the construction of the six functor formalism, that this filtered complex is \emph{strict}, i.e.\ the associated spectral sequence degenerates at the first page.

Now suppose that $\pi : \tilde X \to X$ is a log resolution of $(X, D)$, which is an isomorphism over $X\setminus D$, and write $\tilde D = \pi^*(D)$. Then $\mc{O}_X(*D)f^{-\alpha} = \pi_*\mc{O}_{\tilde X} (*\tilde D)f^{-\alpha}$ as mixed Hodge modules. So \eqref{eq:laumon} gives
\begin{equation} \label{eq:laumon for hodge ideals}
(\mc{O}_X(*D)f^{-\alpha}, F_\bullet) = \mbf{R}\pi_*((\ms{D}_{X \leftarrow \tilde X}, F_\bullet) \Lotimes_{(\ms{D}_{\tilde X}, F_\bullet)} (\mc{O}_{\tilde X}(*\tilde D)f^{-\alpha}, F_\bullet)).
\end{equation}
In \cite[\S 6]{MPbirational}, Musta\c{t}\u{a} and Popa use the fact that $(\tilde X, \tilde D)$ has simple normal crossings to write down an explicit resolution of $(\mc{O}_{\tilde X}(* \tilde D)f^{-\alpha}, F_\bullet)$ by locally free filtered $\ms{D}_{\tilde X}$-modules. Combining with \eqref{eq:laumon for hodge ideals}, this gives an explicit complex computing the Hodge ideals $I_k(\alpha D)$ \cite[Theorem 8.1]{MPbirational}.

In \cite{SY23}, it was asked by Schnell and the second-named author whether one has a similar birational description of the higher multiplier ideals $\tilde{I}_k(\alpha D)$. We note here that this is indeed the case. The main ingredient is the following.

\begin{prop} \label{prop:bistrict pushforward}
Let $\pi : Y \to X$ be a projective morphism and $\mc{M} \in \mhm_{\bar{\mb{Q}}}(Y)$. Abusing notation, write $\iota$ for the graph embeddings of both $f$ and $f \circ \pi$. Then the complex
\[ \mc{C}^{\bullet}\colonequals \mbf{R}\pi_*((\ms{D}_{X \leftarrow Y}, F_\bullet) \Lotimes_{(\ms{D}_Y, F_\bullet)} (V^\bullet \iota_{+}\mc{M}, F_\bullet)) \]
is bi-strict with respect to the filtrations $F_\bullet$ and $V^\bullet$, and in particular
\[ (V^\alpha\iota_+\mc{H}^i(\pi_*\mc{M}), F_\bullet) = \mc{H}^i(\mbf{R}\pi_*((\ms{D}_{X \leftarrow Y}, F_\bullet) \Lotimes_{(\ms{D}_Y, F_\bullet)} (V^\alpha \iota_{+}\mc{M}, F_\bullet)))\]
for all $i$ and all $\alpha\geq 0$.
\end{prop}

Proposition \ref{prop:bistrict pushforward} is a special case of \cite[Proposition 3.3.17]{Saito88}. Saito proves this statement for any $\alpha$  assuming only that the filtered $\ms{D}_{Y \times \mb{C}}$-module $(\iota_+\mc{M}, F_\bullet)$ is quasi-unipotent and regular along $Y \times \{0\}$ and that
\[ \mbf{R}\pi_*((\ms{D}_{X \leftarrow Y}, F_\bullet) \Lotimes_{(\ms{D}_Y, F_\bullet)} (\gr_V^\alpha\mc{M}, F_\bullet)) \]
is strict for all $\alpha$. The first of these conditions holds by definition for any mixed Hodge module and the second by the direct image theorem. One can also deduce Proposition \ref{prop:bistrict pushforward} from Theorem \ref{thm: V filtration via power of fs} as follows (although this argument is strictly speaking circular, as \cite[Proposition 3.3.17]{Saito88} is used to set up the theory of mixed Hodge modules in the first place). This perspective will be useful in \cite{DY}.

\begin{proof}
The basic idea is that Theorem \ref{thm: V filtration via power of fs} allows us to reduce the bi-strictness to the strictness of \eqref{eq:laumon}. Recall from \cite[Remark 10.2.5]{MHMproject} that the bi-strictness is equivalent to all canonical morphisms
\[ \cH^i(F_p\mc{C}^{\bullet})\to F_p\cH^i(\mc{C}^{\bullet}), \quad \cH^i(V_{\alpha}\mc{C}^{\bullet})\to V_{\alpha}\cH^i(\mc{C}^{\bullet}),\]
\begin{equation}\label{eqn: bistrictness} \cH^i\left(F_pV^{\alpha}\mc{C}^{\bullet}\right)\to F_pV^{\alpha}\cH^i(\mc{C}^{\bullet}) \end{equation}
being isomorphisms for $p,i\in \Z$ and $\alpha\geq 0$.

It is not hard to show that 
\[ \mathbf{R}\pi_*\left((\ms{D}_{X \leftarrow Y}, F_\bullet) \Lotimes_{(\ms{D}_Y, F_\bullet)} (\iota_{+}\mc{M}, F_\bullet)\right)=\mathbf{R}(\pi\times \textrm{id}_{\C})_*\left((\ms{D}_{X\times \C \leftarrow Y\times \C}, F_\bullet) \Lotimes_{(\ms{D}_{Y\times \C}, F_\bullet)} (\iota_{+}\mc{M}, F_\bullet)\right)\]
Since $\iota_{+}\cM\in \MHM(Y\times \C)$, the first property follows from the strictness of \eqref{eq:laumon} for $(\pi\times \textrm{id}_{\C})_{\ast}(\iota_{+}\cM)$. The second property is equivalent to the fact that the following canonical morphism is an isomorphism:
\[ \cH^i\left(\pi_{+}(V^{\alpha}\iota_{+}\cM)\right)\to V^{\alpha}\cH^i\left((\pi\times \textrm{id}_{\C})_{+}\iota_{+}\cM)\right).\]
This is classical, see e.g.\ \cite[Theorem 7.5.2]{SabbahnoteDmodule}. It remains to prove that \eqref{eqn: bistrictness} is an isomorphism.

Fix $\alpha\geq 0$ and $p\in \Z$. Set $D=f^{-1}(0)$ and consider the following commutative diagram:
\[\begin{tikzcd}
{} &U_Y\colonequals \pi^{-1}(U) \arrow[dl,"f\circ \pi_U",swap] \arrow[d,"\pi_U"] \arrow[r,"j_Y"] & Y \arrow[d,"\pi"]\\
\C^{\ast} &\arrow[l,"f",swap] U=X- D \arrow[r,"j"] & X
\end{tikzcd}\]
By Theorem \ref{thm: V filtration via power of fs} and the eventual constancy of the inverse systems in Theorem \ref{thm: V filtration via power of fs} for fixed $p$ (see \S \ref{sec: Msfs construction}), one can choose $n\gg 0$ so that 
\begin{align*}
F_pV^{\alpha}\iota_{+}(\cH^i\pi_{\ast}\cM)=F_pj_{\ast}\left(\frac{j^{\ast}\cH^i\pi_{\ast}\cM[s]f^s}{(s+\alpha)^n}\right).
\end{align*}
Since $\cH^i(\mc{C}^{\bullet})=\cH^i(\iota_{+}\pi_{\ast}\cM)=\iota_{+}(\cH^i\pi_{\ast}\cM)$ as mixed Hodge modules, we have
\begin{align*}
F_pV^{\alpha}\cH^i(\mc{C}^{\bullet})&=F_pV^{\alpha}\iota_{+}(\cH^i\pi_{\ast}\cM)\\
&=F_pj_{\ast}\left(j^{\ast}\cH^i\pi_{\ast}\cM\otimes f^{\ast}\frac{\cO_{\C^{\ast}}[s]t^s}{(s+\alpha)^n}\right)\\
&=F_pj_{\ast}\left(\cH^i\pi_{U,\ast}(j_Y^{\ast}\cM)\otimes f^{\ast}\frac{\cO_{\C^{\ast}}[s]t^s}{(s+\alpha)^n}\right)\\
&=F_pj_{\ast}\left(\cH^i\pi_{U,\ast}\left(j_Y^{\ast}\cM\otimes (f\circ \pi_U)^{\ast}\frac{\cO_{\C^{\ast}}[s]t^s}{(s+\alpha)^n}\right)\right)\\
&=F_p\cH^i\pi_{\ast}\cM_{\alpha},
\end{align*}
where we set 
\[ \cM_{\alpha} \colonequals j_{Y,\ast}\left(j_Y^{\ast}\cM\otimes (f\circ \pi_U)^{\ast}\frac{\cO_{\C^{\ast}}[s]t^s}{(s+\alpha)^n}\right)\in \MHM(Y).\]
In above, we have used the proper base change formula, projection formula and the fact that $j_{Y,\ast}$ is an exact functor in the category of mixed Hodge modules. 

On the other hand, applying Theorem \ref{thm: V filtration via power of fs} again, we can let $n\gg 0$ so that
\[ F_qV^{\alpha}\iota_{+}\cM=F_q\cM_{\alpha}, \quad \textrm{for all $q\leq p$}.\]
Note that for any filtered $\sD_Y$-module $(\cN,F_{\bullet})$, $F_p\pi_{\ast}\cN$ only involves $F_q\cN$ for $q\leq p$,  it follows that
\[ \cH^i(F_pV^{\alpha}\mc{C}^{\bullet})=\cH^i\left(F_p\pi_{\ast}(V^{\alpha}\iota_{+}\cM)\right)=\cH^i\left(F_p\pi_{\ast}\cM_{\alpha}\right).\]
Furthermore, by chasing the isomorphisms above, one can identify the canonical morphism \eqref{eqn: bistrictness} with the canonical morphism
\[ \cH^i\left(F_p\pi_{\ast}\cM_{\alpha}\right)\to F_p\cH^i\left(\pi_{\ast}\cM_{\alpha}\right). \]
Since this is an isomorphism by the strictness of \eqref{eq:laumon}, we finish the proof.
\end{proof}

\begin{proof}[Proof of Proposition \ref{prop: containment of higher multiplier ideal}]
Consider a log resolution $\pi : \tilde X \to X$ of $(X, D)$ which is an isomorphism over $X\setminus D$. Set $\tilde{D}=\pi^{\ast}(D)$, then as above $\pi_{\ast}\mc{O}_{\tilde X}(*\tilde D)=\cO_X(\ast D)$.  Since $V^{>0}\iota_{+}\cO_X=V^{>0}\iota_{+}\cO_X(\ast D)$ (similarly for $\cO_{\tilde{X}}$ and $\cO_{\tilde{X}}(\ast \tilde{D}))$, applying Proposition \ref{prop:bistrict pushforward} to $\mc{O}_{\tilde X}(*\tilde D)$ we get
\[ (V^\alpha\iota_+\mc{O}_X, F_\bullet) = \mbf{R}\pi_*((\ms{D}_{X \leftarrow \tilde X}, F_\bullet) \otimes_{(\ms{D}_{\tilde X}, F_\bullet)} (V^\alpha \iota_+\mc{O}_{\tilde X}, F_\bullet)), \quad \textrm{for $\alpha > 0$}. \]
Taking associated gradeds, we get a graded isomorphism
\begin{equation} \label{eq:laumon for grFV}
\bigoplus_k \gr^F_{k + 1} V^\alpha\iota_+\mc{O}_X = \mbf{R}\pi_*\left(\omega_{\tilde X/X} \otimes \mrm{Sym}(\pi^*T_X) \Lotimes_{\mrm{Sym}(T_{\tilde X})}\left(\bigoplus_k \gr^F_{k + 1} V^\alpha\iota_+\mc{O}_{\tilde X}\right)\right),
\end{equation}
where $\pi^*T_X$ and $T_{\tilde X}$ have graded degree $1$. It follows that the tautological morphism $\mrm{Sym}(T_{\tilde X}) \to \mrm{Sym}(\pi^*T_X)$ of $\mrm{Sym}(T_{\tilde X})$-modules defines a morphism
\begin{align*}
\bigoplus_k \pi_*(\omega_{\tilde X/X} \otimes \tilde{I}_{k}(\alpha\tilde D)) &\otimes \partial_t^k = \pi_*\left(\omega_{\tilde X/X} \otimes \mrm{Sym}(T_{\tilde X}) \Lotimes_{\mrm{Sym}(T_{\tilde X})}\left(\bigoplus_k \tilde{I}_{k}(\alpha\tilde D) \otimes \partial_t^k\right)\right), \\
&\to \mbf{R}\pi_*\left(\omega_{\tilde X/X} \otimes \mrm{Sym}(\pi^*T_{X}) \Lotimes_{\mrm{Sym}(T_{\tilde X})}\left(\bigoplus_k \tilde{I}_{k}(\alpha\tilde D) \otimes \partial_t^k\right)\right) \\
&= \bigoplus_k \tilde{I}_{k}(\alpha D) \otimes \partial_t^k,
\end{align*}
which is the identity outside $D$. This gives the desired inclusion of ideals for all $k$.
\end{proof}

\bibliographystyle{alpha}
\bibliography{reference}{}

\vspace{\baselineskip}

\footnotesize{
\textsc{School of Mathematics and Statistics, University of Melbourne, Parkville, VIC, 3010, Australia} \\
\indent \textit{E-mail address:} \href{mailto:dougal.davis1@unimelb.edu.au}{dougal.davis1@unimelb.edu.au}

\vspace{\baselineskip}

\textsc{Department of Mathematics, University of Kansas, 1450 Jayhawk Blvd, Lawrence, KS 66045, United States} \\
\indent \textit{E-mail address:} \href{mailto:ruijie.yang@ku.edu}{ruijie.yang@ku.edu} 
}
\end{document}